\documentclass[11pt]{amsart}


\usepackage{amsfonts,amsmath,latexsym,amssymb,verbatim,amsbsy,amsthm}
\usepackage{graphicx}
\usepackage{caption}
\usepackage{float}
\usepackage{wrapfig}


\usepackage[top=1in, bottom=1in, left=1in, right=1in]{geometry}

\usepackage[dvipsnames]{xcolor}

\usepackage[colorlinks=true, pdfstartview=FitV, linkcolor=RoyalBlue,citecolor=ForestGreen, urlcolor=blue]{hyperref}



\theoremstyle{plain}
\newtheorem{THEOREM}{Theorem}[section]

\newtheorem{theorem}[THEOREM]{Theorem}

\newtheorem{lemma}[THEOREM]{Lemma}

\theoremstyle{definition}

\newtheorem{definition}[THEOREM]{Definition}

\theoremstyle{remark}

\newtheorem{remark}[THEOREM]{Remark}


\newcommand{\thm}[1]{Theorem~\ref{#1}}
\newcommand{\lem}[1]{Lemma~\ref{#1}}

\newcommand{\sect}[1]{Section~\ref{#1}}



\DeclareMathOperator{\Supp}{Supp} %
\DeclareMathOperator{\Tr}{Tr} %
 %
 %
 %


\def \a {\alpha}
\def \b {\beta}

\def \d {\delta}

\def \e {\varepsilon}

\def \l {\lambda}
\def \n {\nabla}

\def \D {\Delta}


\def \bd {{\bf d}}

\def \br {{\bf r}}

\def \bu {{\bf u}}
\def \bv {{\bf v}}
\def \bw {{\bf w}}
\def \bx {{\bf x}}


\def \cA {\mathcal{A}}

\def \cC {\mathcal{C}}
\def \cD {\mathcal{D}}

\def \cM {\mathcal{M}}
\def \cN {\mathcal{N}}
\def \cO {\mathcal{O}}

\def \cT {\mathcal{T}}

\def \cV {\mathcal{V}}

\def \cX {\mathcal{X}}


\def \dH {\dot{H}}


\newcommand{\R}{\ensuremath{\mathbb{R}}}   
\newcommand{\T}{\ensuremath{\mathbb{T}}}   

\renewcommand{\S}{\ensuremath{\mathbb{S}}} 


\def \Lip {\mathrm{Lip}}


\def \lan {\langle}
\def \ran {\rangle}

\def \p {\partial}

\def \db  {\, \mbox{d}\beta}
\def \dx  {\, \mbox{d}x}

\def \dt  {\, \mbox{d}t}

\def \dr  {\, \mbox{d}r}
\def \ds  {\, \mbox{d}s}

\def \ddt  {\frac{\mbox{d\,\,}}{\mbox{d}t}}

\hyphenation{ri-go-rously}
\hyphenation{thres-hold}
\hyphenation{Su-ppose}
\hyphenation{para-me-ters}

\begin{document}

\title[Unidirectional flocks]{ Existence and stability of unidirectional flocks in hydrodynamic Euler Alignment systems}

\author{Daniel Lear} \author{Roman Shvydkoy}

\address{Department of Mathematics, Statistics, and Computer Science, University of Illinois, Chicago}

\email{lear@uic.edu}
\email{shvydkoy@uic.edu}

\date{\today}

\subjclass{92D25, 35Q35, 76N10}

\keywords{flocking, alignment, Cucker-Smale, Mikado solutions, Euler Alignment}

\thanks{\textbf{Acknowledgment.} Research of RS is supported in part by NSF
	grant DMS-1813351}

\maketitle

\begin{abstract}
In this note we reveal new classes of solutions to hydrodynamic Euler alignment systems governing collective behavior of flocks.  The solutions describe unidirectional parallel motion of agents, and are globally well-posed in multi-dimensional settings subject to a threshold condition similar to the one dimensional case. We develop the flocking and stability theory of these solutions and show long time convergence to traveling wave with rapidly aligned velocity field. 

In the context of multi-scale models introduced in \cite{ST-multi} our solutions can be superimposed into Mikado formations -- clusters of unidirectional flocks pointing in various directions. Such formations exhibit multiscale alignment phenomena and resemble realistic behavior of interacting large flocks.

\end{abstract}


\section{Introduction and statement of main results}
We consider the following  hydrodynamic Euler Alignment system for density $\rho(x,t)$ and velocity $\bu(x,t)=(u^{1}(x,t),\ldots,u^{n}(x,t))$  :
\begin{equation}\label{e:CSHydro}
(x,t)\in\mathbb{R}^{n}\times\mathbb{R}^{+}\qquad \left\{
\begin{split}
\partial_t \rho +\nabla\cdot (\rho \bu )&= 0, \\
\partial_t  \bu +\bu \cdot\nabla \bu &= \phi \ast (\rho  \bu ) - \bu (\phi \ast \rho),
\end{split}\right.
\end{equation}
subject to initial condition
$$\left( \rho(\cdot,t), \bu (\cdot,t)\right)|_{t=0}=(\rho_{0},\bu_{0}).$$
The crucial feature of the alignment term in \eqref{e:CSHydro} is its commutator representation given by
\begin{equation}\label{e:alignmentterm}
\mathcal{C}_{\phi}(\bu,\rho):= \phi \ast (\rho  \bu ) - \bu (\phi \ast \rho)=\int_{\R^n}\phi(x-y)\left(\bu(y)-\bu(x)\right)\rho(y)\,dy.
\end{equation}
Here, $\phi(x,y) = \phi(x-y)$ represents a positive communication kernel, which we assume is smooth and bounded throughout $\R^n$.

The system (\ref{e:CSHydro}) arises as a macroscopic realization of the Cucker-Smale agent-based dynamics \cite{CS2007a,CS2007b}, which describes collective motion of $N$ agents adjusting their directions to a weighted average of velocities of its neighbors:
\begin{equation}\label{e:CSagent}
(\bx_{i},\mathbf{v}_{i})\in\mathbb{R}^{n}\times\mathbb{R}^{n}\qquad \left\{
\begin{array}{@{} l l @{}}
\dot{\bx}_{i}&\hspace{-0.2 cm}= \mathbf{v}_{i}, \\
\dot{\mathbf{v}}_{i}&\hspace{-0.2 cm}=\frac{1}{N}\sum_{j=1}^{N}\phi(|\bx_{i}-\bx_{j}|)(\mathbf{v}_{j}-\mathbf{v}_{i}).
\end{array}\right. 
\end{equation}

We refer to \cite{HL2009,FK2017,HT2008} for full details and  rigorous derivations. Typical assumptions on $\phi(r)$ include monotonic decay at infinity and non-degeneracy, $\phi(r) >0$,  thus reflecting the intuition that alignment becomes weaker, yet persistent, as the distance becomes larger.  When communication remains sufficiently strong at infinity, expressed by the ``fat tail" condition
\begin{equation}\label{e:ft}
	\int_0^\infty \phi(r) \dr = \infty,
\end{equation}
the system \eqref{e:CSHydro} (as well as its discrete counterpart) exhibits alignment dynamics, that is for any global strong solution,  
\begin{equation}\label{alignment}
\mathcal{A}(t):=\underset{\left\lbrace x, y \right\rbrace\in\Supp \rho(\cdot,t)}{\text{max}}|\bu(x,t)-\bu( y ,t)|\to 0\qquad \text{as} \quad t\to\infty
\end{equation}
exponentially fast, and the diameter of the flock remains globally bounded:
$$
\mathcal{D}(t)\leq \bar{\cD}<\infty \qquad \text{where }\quad \mathcal{D}(t):=\underset{\left\lbrace x, y \right\rbrace\in\Supp \rho(\cdot,t)}{\text{max}}|x- y |.
$$
The tendency of solutions to flock made the system suitable for various technological and behavioral applications, see \cite{VZ2012,MT2014,Darwin}, and ignited a line of recent mathematical research, see Tadmor et al \cite{MT2014,ST-topo,ST1,TT2014}, Carrillo et al \cite{CCP2017,CCTT2016,CCMP2017} and references therein.

\begin{wrapfigure}{l}{0.38\textwidth}
	\centering
     \includegraphics[width=0.36\textwidth]{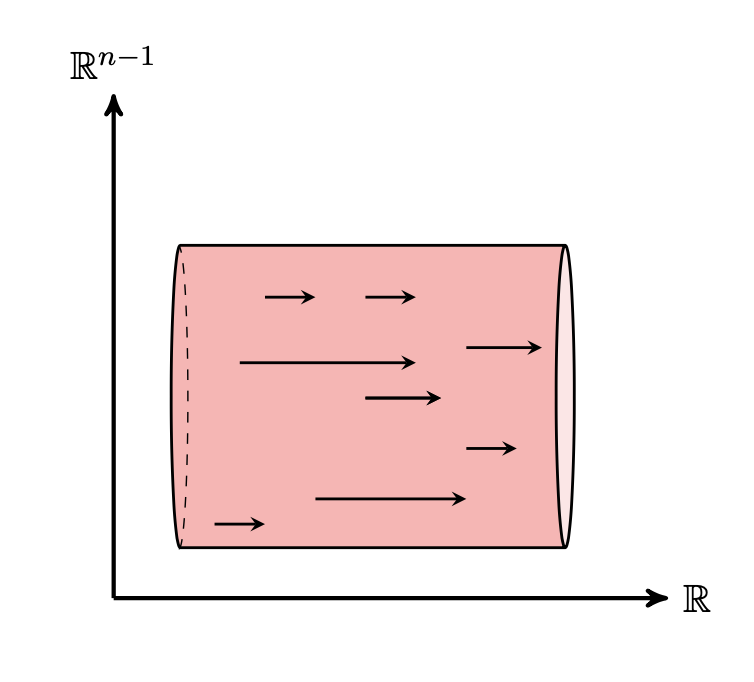}
\caption{Unidirectional flow}
\label{f:uni}
\end{wrapfigure}

Concerning well-posedness of the system \eqref{e:CSHydro}, we clearly have two competing mechanisms: Burger's transport and alignment regularization. The alignment regularization depends on how dense the flock is in that particular region of space. So, it is natural to expect that if $\rho$ has a vacuum region the solution blows up, see Tan \cite{Tan2017}.  The one-dimensional  theory \cite{CCTT2016} provided precise characterization for  global existence in terms of the threshold condition $\partial_x u_0(x)\geq -\left(\phi\ast\rho_0 \right)(x)$ for all $x\in\R$. An attempt to extend the study to dimension 2  was made in \cite{TT2014}, and improved in \cite{HeT2017} to a smallness condition on the spectral gap of the strain tensor $\n \bu_0 + \n^\perp \bu_0$. The well posedness in dimension 2 and higher, however, remains wide open.

In this paper we study a new class of flocks which do not fall under any of the previously considered categories -- these are unidirectional flows in multi-D setting, i.e.
\begin{equation}\label{e:ansatz}
\bu(x ,t):=\langle u(x,t),0,\ldots,0\rangle \qquad \text{for} \quad u:\mathbb{R}^n\times \mathbb{R}^{+}\rightarrow \mathbb{R},
\end{equation}
see Figure \ref{f:uni}. In view of the maximum principle the orientation of ansatz \eqref{e:ansatz} is preserved in time.  Note that the non-trivial component $u(x,t)$ may depend on all coordinates. So, our solutions exhibits features of a 1D flow, yet being on $\R^n$ represent solutions of a multi-D  system of scalar conservation laws:
\begin{equation}\label{e:CSHansatz}
(x,t)\in\mathbb{R}^n\times\mathbb{R}^{+}\qquad \left\{
\begin{array}{@{} l l @{}}
\partial_t \rho  +\partial_1(\rho u)&\hspace{-0.3 cm}=0, \\
\partial_t u+ \frac12\partial_1 (u^2)&\hspace{-0.3 cm}=\phi \ast (\rho u) - u (\phi\ast \rho).
\end{array}\right. 
\end{equation}
At the core of regularity theory of these solutions  is the analysis of the entropy quantity
\begin{equation}\label{entropy}
e:=\p_1 u+\phi\ast \rho,
\end{equation}
which happens to retain the same conservation law as in 1D:
\begin{equation}\label{e:elaw}
	\p_t e + \p_1 ( u e) = 0.
\end{equation}
See \cite{CCTT2016} for analysis in 1D, and \cite{LS-entropy} for the interpretation of $e$  as a topological entropy of the limiting flock.  The main global result is the following.

\begin{theorem}\label{t:gwp}
	Consider the multi-dimensional system (\ref{e:CSHansatz}) with smooth, monotone and positive communication kernel $\phi$. Suppose  $m \geq k+1 > \tfrac{n}{2}+2$ and $(u_0,\rho_0) \in H^m \times ( L^1_+ \cap W^{k,\infty})$. 
	\begin{itemize}
		\item {\em [Subcritical region]}. If $e_0(x) \geq 0$ for all $ x\in \R^n$, then  there exists a unique global solution 
			\begin{equation}\label{e:localclass2}
		(u,\rho) \in C_w([0,\infty); H^m \times (L^1_+ \cap W^{k,\infty})).
		\end{equation}
		Moreover if the kernel has fat tail \eqref{e:ft}, the following strong exponential flocking occurs:
		\[
		\cA(t) + |\nabla u(t)|_{L^{\infty}(\Supp\, \rho(\cdot,t))}+ |\nabla^2 u(t)|_{L^{\infty}(\Supp\, \rho(\cdot,t))}\lesssim e^{-\delta t}.
		\]
	and the density converges to a traveling wave solution: there exists $\bar{\rho} \in W^{1,\infty}$ such that 
		\[
		|\rho(t)-\bar{\rho}(\cdot - \bar{u} t)|_{C^{\gamma}}\lesssim e^{-\delta t} \qquad ( \forall\ 0<\gamma<1).
		\]
		\item {\em [Supercritical region]}. If $e_0(x_0)<0$ at some point $x_0\in \R^n$, then the solution blows up in finite time along the characteristics emanating from  $x_0$. 
	\end{itemize}
\end{theorem}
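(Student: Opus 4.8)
The plan is to treat \eqref{e:CSHansatz} by a standard quasilinear energy method, close the estimates globally in the subcritical regime using the transported entropy \eqref{entropy}--\eqref{e:elaw}, and then read off flocking, the decay of derivatives and the traveling-wave limit by integrating a handful of scalar ODEs along the characteristics $\dot X=\langle u(X,t),0,\dots,0\rangle$ associated with the ansatz \eqref{e:ansatz}. First I would record that the nonlocal alignment term $\mathcal{C}_{\phi}(u,\rho)=\phi\ast(\rho u)-u(\phi\ast\rho)$ is a lower-order perturbation of the Burgers flux $\tfrac12\partial_1(u^2)$: its norm in the space in which $u$ is estimated is controlled by $\|u\|$ in that space times a constant depending only on $\phi$ and the conserved mass $M_0=\|\rho_0\|_{L^1}$, while $\phi\ast\rho$ is bounded in $W^{m,\infty}$ by $M_0$. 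With this, a Moser/Kato--Ponce commutator estimate for $u$ together with the transport estimate for $\rho$ in $L^1_+\cap W^{k,\infty}$ (positivity and mass are preserved) gives, through mollification and compactness, a unique local solution in the class \eqref{e:localclass2}, with the continuation criterion that it persists as long as $\int_0^T\|\nabla u(s)\|_{L^\infty}\,ds<\infty$; only weak-in-time continuity at the top order is available, accounting for the $C_w$.

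Next, the heart of the global argument is the entropy. By \eqref{e:elaw} the quantity $e$ is transported, and along a characteristic $X(t)$ --- whose transverse coordinates stay frozen, consistently with \eqref{e:ansatz} --- it satisfies the logistic ODE
\[
\tfrac{d}{dt}\,e(X(t),t)=-e\,\partial_1 u=e\big((\phi\ast\rho)(X(t),t)-e\big).
\]
If $e_0\ge0$ then $e\ge0$ for all time, and since $0\le\phi\ast\rho\le\|\phi\|_{L^\infty}M_0$ the ODE keeps $e$ bounded, hence $\partial_1 u=e-\phi\ast\rho$ is bounded; the transverse and higher derivatives of $u$ are then controlled by bootstrapping their own transport ODEs (each $\partial_j u$, $j\ge2$, obeys $\tfrac{d}{dt}\partial_j u=-(\partial_j u)\partial_1 u+\partial_j\mathcal{C}_\phi$ with bounded right-hand side, so $\|\nabla u\|_{L^\infty}$, and inductively $\|u\|_{W^{k+1,\infty}}$, grows at most exponentially on finite intervals). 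This feeds the $W^{k,\infty}$-bound for $\rho$ and the $H^m$-energy inequality (whose nonlinear part is $\lesssim\|\nabla u\|_{L^\infty}\|u\|_{H^m}^2$), so the continuation criterion is met and the solution is global. Conversely, if $e_0(x_0)<0$ then along the characteristic from $x_0$ one has $e<0$ while the solution is smooth, so $e(\phi\ast\rho)\le0$ and $\tfrac{d}{dt}e\le-e^2$; comparison gives $e(X(t),t)\le e_0(x_0)/(1+e_0(x_0)t)\to-\infty$ as $t\uparrow1/|e_0(x_0)|$, so $\partial_1 u\to-\infty$ (and $\rho(X(t),t)=\rho_0(x_0)\,e^{-\int_0^t\partial_1 u}\to+\infty$) in finite time: blow-up along that characteristic.

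For the fat-tail case I would run the Cucker--Smale flocking argument adapted to the unidirectional geometry. With $\mathcal{A}(t),\mathcal{D}(t)$ as in \eqref{alignment} (taking $\rho_0$ of finite diameter), the frozen transverse directions give $\tfrac{d}{dt}\mathcal{D}\le\mathcal{A}$, while evaluating the commutator form \eqref{e:alignmentterm} at the running extrema of $u$ over $\Supp\rho$ and using $\phi(|x-y|)\ge\phi(\mathcal{D})$ there yields $\tfrac{d}{dt}\mathcal{A}\le-\phi(\mathcal{D})M_0\mathcal{A}$. Hence $\mathcal{A}+M_0\Phi(\mathcal{D})$ is nonincreasing with $\Phi(r)=\int_0^r\phi$, and the fat tail \eqref{e:ft} forces $\mathcal{D}(t)\le\bar{\mathcal D}<\infty$, whence $\mathcal{A}(t)\lesssim e^{-\delta t}$ with $\delta=\phi(\bar{\mathcal D})M_0$; moreover $\phi\ast\rho\ge\delta$ on $\Supp\rho$, and, momentum being conserved, $u\to\bar u:=\tfrac1{M_0}\int u\rho$ uniformly on the flock.

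The part I expect to be the main obstacle is the decay of derivatives (strong flocking) and the traveling-wave limit, which I would again extract along characteristics $X(t;\alpha)$, $\alpha\in\Supp\rho_0$, noting these remain in $\Supp\rho$. From the commutator form one gets $\tfrac{d}{dt}\partial_1 u=-(\partial_1 u)e+R$ with $|R|\le\|\partial_1\phi\|_{L^\infty}M_0\mathcal{A}\lesssim e^{-\delta t}$, while $\tfrac{d}{dt}e=-e\,\partial_1 u$; subtracting gives $\tfrac{d}{dt}(\phi\ast\rho)(X(t),t)=-R$, so $(\phi\ast\rho)(X(t),t)$ converges exponentially to some $L(\alpha)\ge\delta>0$. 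Inserting this into the logistic equation for $e$ and using that $e$ stays uniformly positive on the (compact) flock --- which is where strict positivity of $e_0$ there is used, so that the logistic dynamics pins $e$ near $\phi\ast\rho$ --- forces $e(X(t),t)\to L(\alpha)$ exponentially, hence $\partial_1 u=e-\phi\ast\rho\to0$ exponentially. The transverse derivatives obey $\tfrac{d}{dt}\partial_j u=-(\partial_j u)e+\tilde R_j$ with $|\tilde R_j|\lesssim e^{-\delta t}$ and $e$ bounded below, hence decay, and one more differentiation yields a triangular hierarchy for the second derivatives whose forcing is a sum of already-decaying quadratic products and exponentially small convolution remainders, giving $|\nabla u|_{L^\infty(\Supp\rho)}+|\nabla^2 u|_{L^\infty(\Supp\rho)}\lesssim e^{-\delta t}$. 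Finally $X_1(t;\alpha)=\alpha_1+\bar u t+\int_0^t(u-\bar u)\,ds$ converges since $|u-\bar u|\lesssim e^{-\delta s}$, so $X(t;\alpha)-\bar u t\,\be_1\to X^\infty(\alpha)$ exponentially (in $C^1$ in $\alpha$, using the $\nabla^2 u$-decay), and $\rho(X(t;\alpha),t)=\rho_0(\alpha)\,e^{-\int_0^t\partial_1 u}\to\rho_0(\alpha)\,e^{-\int_0^\infty\partial_1 u}>0$; pushing this forward by $X^\infty$ produces $\bar\rho\in W^{1,\infty}$ with $\rho(\cdot,t)-\bar\rho(\cdot-\bar u t)$ of size $O(e^{-\delta t})$ in $C^0$ and $O(1)$ in $C^1$, so interpolation gives the stated $C^\gamma$ rate for every $\gamma<1$. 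The crux throughout is this coupled control of $e$ and $\phi\ast\rho$ along characteristics: the decay of $\partial_1 u$ comes from the pointwise, logistic structure of the entropy rather than from any coercive energy estimate, and it is this that simultaneously yields the derivative flocking and the existence of the limiting profile $\bar\rho$.
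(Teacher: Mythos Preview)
Your proposal is correct and follows the same architecture as the paper: the logistic ODE $\tfrac{d}{dt}e=e(\phi\ast\rho-e)$ along characteristics yields the sub/supercritical dichotomy, the Cucker--Smale Lyapunov argument gives flocking and the eventual lower bound $e\ge\tfrac12\phi(\bar{\mathcal D})M_0$ on $\Supp\rho$, and then the hierarchy of scalar ODEs $\tfrac{d}{dt}\partial_i u=\mathcal C_{\partial_i\phi}(u,\rho)-e\,\partial_i u$ (and its second-order analogues, treated in triangular order) produces the exponential decay of derivatives; the density limit follows.

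The differences are in execution rather than idea. First, the paper's local theory (\thm{t:locsmooth}) proves the sharper continuation criterion $\int_0^{T_0}\inf_x(\nabla\cdot\bu)\,dt>-\infty$ via a Lagrangian bound on $\nabla X$, $\nabla v$ and the Liouville formula for $\det\nabla X$; with this, global existence is a one-liner from $\partial_1 u=e-\phi\ast\rho\ge-|\phi|_\infty M_0$, and your bootstrap of the transverse $\partial_j u$ at the existence stage is unnecessary. Second, your route to the decay of $\partial_1 u$ through the explicit convergence of $(\phi\ast\rho)(X(t),t)$ is a valid detour, but the paper simply uses the same damped ODE $\tfrac{d}{dt}\partial_i u=-e\,\partial_i u+R_i$, $|R_i|\lesssim\mathcal A(t)$, uniformly in $i$ (both arguments need, and the paper also tacitly uses, $e_0>0$ on $\Supp\rho_0$ to secure the lower bound on $e$). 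Third, the paper constructs $\bar\rho$ by the Eulerian Cauchy argument $|\partial_t\tilde\rho|_\infty\lesssim e^{-\delta t}$ for $\tilde\rho(x,t):=\rho(x_1+\bar u t,x_2,\dots,x_n,t)$, whereas you push forward by the limiting flow map $X^\infty$; these are equivalent, and your version makes the profile more explicit.
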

 The limiting velocity $\bar{u}$ is determined from the initial conditions due to conservation of momentum and mass
\[
\bar{u} := \frac{1}{\mathcal{M}} \int_{\R^n}\left(\rho\, u\right) (x,t)\dx, \quad  \mathcal{M}:=\int_{\R^n}\rho(x,t)\dx.
\]

The strong flocking was first proved in 1D under more stringent assumptions on the kernel in \cite{ST2}, and for 1D multi-scale models in \cite{ST-multi}. There does not seem to be a rule in either 1D or our situation  on how to determine the limiting density distribution of the flock  $\bar{\rho}$ -- this appears to be an \emph{emerging} quantity of the dynamics. However, the entropy estimates done in \cite{LS-entropy} show that, at least on the periodic domain the size of $e$ directly controls how far $\bar{\rho}$ is from the uniform distribution.

Our next result concerns perturbations of the constructed oriented solutions and their stability.

\begin{theorem}\label{t:stability}
Consider the multi-dimensional Euler Alignment system \eqref{e:CSHydro} on the periodic domain.   Let $(\bu_0,\rho_0) \in H^{m}\times (L_{+}^{1}\cap W^{k,\infty})$ with $m \geq k+1 > \tfrac{n}{2}+2$ and  initial velocity
\begin{equation}\label{e:perturb}
	\bu_0 (x)= u_0(x) \bd +  \,v_0(x) \bd^{\star} \qquad \text{for some }  \bd,\bd^{\star}\in\S^{n-1},
\end{equation}
satisfying  
\begin{equation}\label{e:v0}
\inf_{x\in \T^n} e_0(x) \geq \sqrt{\e}, \quad \|u_0\|_{W^{1,\infty}} \approx 1, \quad  \|v_0\|_{W^{1,\infty}} \approx \e^2,
\end{equation}
for $\e$ small enough. Then the solution of \eqref{e:CSHydro} exists globally in time and is stable  around the underlying unidirectional motion:
\begin{equation}
	|\n v(t)|_\infty \lesssim \e, \quad \forall t>0.
\end{equation}
\end{theorem}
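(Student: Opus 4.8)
The plan is to reduce to a two–component velocity and then run a continuity argument that simultaneously controls the entropy $e=\p_1 u+\phi\ast\rho$ of the leading component and a short hierarchy of damped transport equations for the first derivatives of $u$ and $v$.

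\emph{Reduction and cheap bounds.} The Burgers term $\bu\cdot\n\bu$ and the linear operator $\cC_\phi(\cdot,\rho)$ both map the linear span of $\bd,\bd^{\star}$ into itself, so the velocity stays in that plane for all time (the planar ansatz solves the system and matches the data); after a rotation (and, if $\bd\cdot\bd^{\star}\neq0$, a Gram--Schmidt change of the unknowns $u_0,v_0$ that perturbs the data only by $O(\e^2)$ and hence respects \eqref{e:v0}) we may take $\bd=\be_1$, $\bd^{\star}=\be_2$ and $\bu=(u,v,0,\dots,0)$, so that $(u,\rho)$ solves \eqref{e:CSHansatz} while $v$ solves $\p_t v+u\p_1 v+v\p_2 v=\cC_\phi(v,\rho)$. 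On the torus, with $\phi_-:=\min_{\T^n}\phi>0$, we have $\phi\ast\rho\ge\phi_-\cM>0$ uniformly in $x,t$. The $u$– and $v$–equations being relaxation–type transport equations, the maximum principle gives, for all $t$, $\|u(t)\|_\infty\le\|u_0\|_\infty\lesssim1$, $\|v(t)\|_\infty\le\|v_0\|_\infty\lesssim\e^2$, and $\cA_u(t):=\max u-\min u\le\cA_u(0)\lesssim1$, $\cA_v(t)\le\cA_v(0)\lesssim\e^2$.

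\emph{The damped hierarchy.} Differentiating the system gives transport identities whose damping structure is the whole point. The entropy satisfies
\[
(\p_t+\bu\cdot\n)e=-e^2+e\,(\phi\ast\rho)-\p_2 v\,(\phi\ast\rho)-\p_1 v\,\p_2 u-\p_2\cC_\phi(v,\rho),
\]
so up to terms quadratic in the smallness of $v$ it obeys the logistic law $e'=-e^2+e\,(\phi\ast\rho)$ that pins it away from $0$ and from $\infty$. Next $\p_1 u=e-\phi\ast\rho$, each transverse derivative $\p_j u$ ($j\neq1$) obeys $(\p_t+\bu\cdot\n)\p_j u=-e\,\p_j u+S^u_j$ with $|S^u_j|\lesssim\cA_u(t)+\|\n v\|_\infty\|\n u\|_\infty$, the derivative $\p_1 v$ is \emph{weakly} damped,
\[
(\p_t+\bu\cdot\n)\p_1 v=-(e+\p_2 v)\,\p_1 v+\textstyle\int\p_1\phi(x-y)\,\big(v(y)-v(x)\big)\,\rho(y)\,\dy,
\]
and every other derivative $\p_j v$ ($j\neq1$) is \emph{strongly} damped, the factor $-(e+\p_2 v)$ above being replaced by $-(\phi\ast\rho+\p_2 v)$ and an extra source $-\p_j u\,\p_1 v$ appearing. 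The dichotomy to exploit: the damping rate is the a priori only small quantity $e\gtrsim\sqrt\e$ for $\p_1 v$ and for the transverse derivatives of $u$, but the large quantity $\phi\ast\rho\gtrsim1$ for the remaining derivatives of $v$.

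\emph{Bootstrap.} On the maximal existence interval $[0,T]$ I would assume
\[
\tfrac12\sqrt\e\le e(x,t)\le C_1,\qquad \|\n u(t)\|_\infty\le C_2\,\e^{-1/2},\qquad \|\p_1 v(t)\|_\infty\le C_3\,\e^{3/2},\qquad \|\n v(t)\|_\infty\le C_3\,\e,
\]
which hold at $t=0$ by \eqref{e:v0} (e.g. $\|\p_1 v_0\|_\infty\lesssim\e^2\le C_3\e^{3/2}$). Then, along characteristics, every correction term in the $e$–identity is $O(\e)$: $\|\p_2 v\|_\infty\|\phi\ast\rho\|_\infty\lesssim\e$ (as $\phi\ast\rho\lesssim1$), $\|\p_1 v\|_\infty\|\p_2 u\|_\infty\lesssim\e^{3/2}\e^{-1/2}=\e$, and $|\p_2\cC_\phi(v,\rho)|\lesssim\|v\|_\infty+\|\p_2 v\|_\infty\lesssim\e$; hence $(\p_t+\bu\cdot\n)e\ge-e^2+\phi_-\cM\,e-C\e$, whose drift $\phi_-\cM\sqrt\e$ beats $C\e$ for small $\e$, so $e$ cannot descend through $\sqrt\e$, while $-e^2$ blocks growth past a data–dependent $C_1^{\ast}$ — improving the bounds on $e$. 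Since $\|\p_1 u\|_\infty=\|e-\phi\ast\rho\|_\infty\lesssim1$, and integrating the $\p_j u$ equations ($j\neq1$) — damping $\ge\tfrac14\sqrt\e$, source $\lesssim\cA_u(t)+C_2C_3\sqrt\e\lesssim1$ — gives $\|\p_j u(t)\|_\infty\lesssim\e^{-1/2}$ with constant independent of $C_2$, we get $\|\n u(t)\|_\infty\lesssim\e^{-1/2}$. The $\p_1 v$ equation (damping $\ge\tfrac14\sqrt\e$, source $\lesssim\|v\|_\infty\lesssim\e^2$) then gives $\|\p_1 v(t)\|_\infty\lesssim\e^2+\e^2/\sqrt\e\lesssim\e^{3/2}$, and each other $\p_j v$ equation (damping $\ge\tfrac12\phi_-\cM$, source $\lesssim\|v\|_\infty+\|\n u\|_\infty\|\p_1 v\|_\infty\lesssim\e^2+\e^{-1/2}\e^{3/2}\lesssim\e$) gives $\|\p_j v(t)\|_\infty\lesssim\e$. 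Each derived constant being independent of the assumed one, one fixes $C_1$, then $C_2$, then $C_3$, and finally takes $\e$ small; the continuity argument closes and $|\n v(t)|_\infty\lesssim\e$ for all $t>0$.

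\emph{Global existence; main obstacle.} The a priori bounds $\|\n\bu(t)\|_\infty\lesssim\e^{-1/2}$ and $e(t)\ge\sqrt\e>0$ (the latter excluding the transport/Burgers shock in the $x_1$–direction, exactly as in the proof of \thm{t:gwp}) propagate the $W^{k,\infty}$–norm of $\rho$ and the $H^m$–norm of $(u,v)$ on every finite interval, so the solution never breaks down and is global. \textbf{The main obstacle} is the power–counting in the bootstrap: the only damping for $\p_1 v$ and for the transverse derivatives of $u$ is the weak rate $e\sim\sqrt\e$, so these grow to size $\e^{3/2}$ and $\e^{-1/2}$ respectively; the argument closes only because each feedback term into the $e$–equation and into the strongly damped $\p_j v$–equations is a product of one large and one small factor — hence $O(\e)$ — together with the cheap bound $\|v\|_\infty\lesssim\e^2$. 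This is exactly why the data are normalized so that $\inf e_0\ge\sqrt\e$ and $\|v_0\|_{W^{1,\infty}}\approx\e^2$. (Using in addition the exponential decay $\cA_u(t)\lesssim e^{-\phi_-\cM t}$ one may upgrade $\|\n u(t)\|_\infty\lesssim1$, and the same scheme then delivers the sharper $|\n v(t)|_\infty\lesssim\e^{3/2}$.)
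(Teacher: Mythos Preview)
Your argument is essentially correct but follows a genuinely different route from the paper. The paper does \emph{not} run a bootstrap directly on $(e,\nabla u,\nabla v)$; instead it isolates the scalar $\mathfrak{E}:=(\nabla\cdot\bu)^2-\Tr[(\nabla\bu)^2]=2(\p_1u\,\p_2v-\p_2u\,\p_1v)$, derives for it the closed transport law
\[
\ddt\mathfrak{E}=-(e+\phi\ast\rho)\,\mathfrak{E}+\Upsilon_\phi(u,v),
\]
with $\Upsilon_\phi$ built from brackets $\{u,\cC_\phi(\rho,v)\}$, $\{v,\cC_\phi(\rho,u)\}$, and runs a single barrier argument at the first time $\mathfrak{E}=\e$. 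On that interval the paper's entropy is $e=\nabla\cdot\bu+\phi\ast\rho$ (yours is $\p_1u+\phi\ast\rho$, differing by $\p_2v=O(\e)$), and the gradient bounds in Lemma~4.4 are obtained from the determinant identity \eqref{e:genralD} together with the a~priori bound $\mathfrak{E}\le\e$; the $\p_k$-derivatives for $k\ge3$ are handled afterwards by a separate $2\times2$ linear ODI. Your approach bypasses the $\mathfrak{E}$-equation and its cancellation entirely, trading that structural insight for a more hands-on hierarchy; what you gain is that all coordinates $k=1,\dots,n$ are treated uniformly within the same bootstrap, and no special identity needs to be discovered.

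Two small points. First, the sentence ``$(u,\rho)$ solves \eqref{e:CSHansatz}'' is wrong---the $u$-equation carries the extra convective term $v\,\p_2u$ and the continuity equation has $\p_2(\rho v)$---but your subsequent identities (for $e$, $\p_ju$, $\p_jv$) are all derived from the correct system, so this is only a slip of the pen. Second, your claim that ``each derived constant is independent of the assumed one'' is not literally true: the Gr\"onwall bound on $\p_jv$ ($j\neq1$) has source $\p_ju\,\p_1v$ of size $C_2C_3\e$, so the output constant for $\|\p_jv\|_\infty/\e$ is $\approx 2C_2C_3/(\phi_-\cM)$, which \emph{does} depend on $C_3$. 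The loop closes because you may first freeze the derived constant $\tilde C_3^{(1)}$ for $\p_1v$ (whose source is only $\cA_v\lesssim\e^2$, independent of all $C$'s), then feed that---rather than the assumed $C_3$---into the $\p_jv$ estimate, and finally choose $C_3\gtrsim C_2\tilde C_3^{(1)}/(\phi_-\cM)$ and $\e$ small. Your stated order ``$C_1$, then $C_2$, then $C_3$, then $\e$'' is exactly this, but the text should make explicit that later steps use the \emph{derived} earlier bounds, not the bootstrap hypotheses.
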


We note that perturbed solutions no longer fulfill conservation of entropy \eqref{e:elaw}, and as a result the technical challenge in proving \thm{t:stability} is to establish uniform control on the residual term that appears in the new entropy equation.  We will be able to perform such control for one dimensional perturbations of type \eqref{e:perturb}, and leave the general case for future research. 

The main application of our study lies in a construction of truly multi-dimensional solutions to the multi-scale alignment system introduced in \cite{ST-multi}.   The model is designed to describe behavior of cluster systems consisting of several interacting flocks. Alignment mechanisms within each flock are assumed to act on a faster time scale than between the flocks, and the agents of a given flock react to other flocks only though their group parameters. To write down the system we denote macroscopic flock variables by $(\rho_{\alpha},\bu_{\alpha})$ for $\alpha=1,\ldots, A$ and we define the following global flock parameters: center of masses
\[
 \mathcal{X}_{\alpha}(t):=\frac{1}{\mathcal{M}_{\alpha}}\int_{\R^n}x\,\rho_{\alpha}(x,t)\,dx \quad \text{where}  \quad \mathcal{M}_{\alpha}:=\int_{\R^n}\rho_{\alpha}(x,t)\dx,
\]
and momenta
\[
\mathcal{V}_{\alpha}(t):=\frac{1}{\mathcal{M}_{\alpha}}\int_{\R^n} \bu_{\alpha}(x,t)\rho_{\alpha}(x,t)\dx.
\]
The system represents as hybrid of hydrodynamic and discrete parts, where the hydrodynamic part corresponds to the Euler Alignment dynamics within each flock, while the discrete part governs inter-flock communication:
\begin{equation}\label{e:CSmultiflocks}
\left\{
\begin{split}
\partial_t \rho_{\alpha} +\nabla \cdot (\rho_{\alpha}  \bu _{\alpha})&= 0, \\
\partial_t  \bu _{\alpha}+ \bu _{\alpha}\cdot\nabla \bu _{\alpha}&=\l_\a [\phi_{\alpha}\ast (\rho_{\alpha}  \bu _{\alpha})- \bu _{\alpha}\left( \phi_\a \ast \rho_{\alpha}\right) ] +\varepsilon\sum_{\beta\neq\alpha}\cM_{\beta}\Psi(\mathcal{X}_{\alpha}-\mathcal{X}_{\beta})
\left(\mathcal{V}_{\beta}-\bu_{\alpha}\right),
\end{split}\right.
\end{equation}
where communication between flocks is assumed to be weaker than communication inside each of the flocks $\varepsilon\ll\min_{\alpha}\lambda_{\alpha}$.

\begin{figure}
	\centering
	\includegraphics[width=0.38\textwidth]{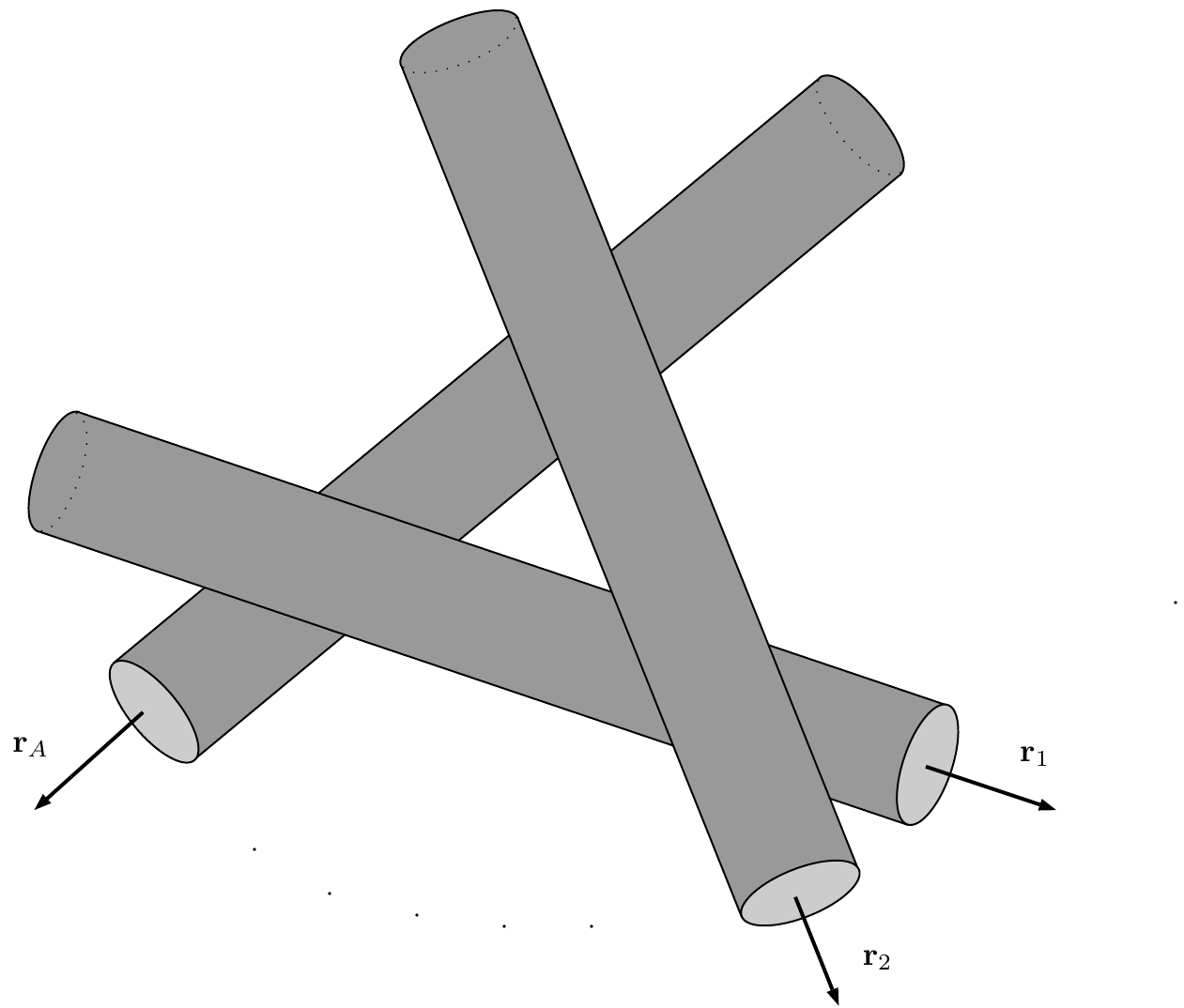}
	\caption{Mikado cluster in $v$-variables.}
	\label{f:mikado}
\end{figure}

Here $\phi_\a$'s are internal kernels and $\Psi$ is an inter-flock kernel, $\alpha=1,\ldots,A$. Integrating each momentum equation above one arrives at the upscaled system for global parameters $\cX_\a, \cV_\a$ given by the classical Cucker-Smale \eqref{e:CSagent} with $\phi = \Psi$.

As in the mono-flock case the system has a well-developed 1D theory, and exhibits flocking behavior on the prescribed time scales, see \cite{ST-multi}.  Using our  solutions as building blocks we can now construct new solutions to \eqref{e:CSmultiflocks} which form Mikado clusters -- by analogy with Mikado solutions to the 3D incompressible Euler equation which played crucial role in resolution of the celebrated Onsager conjecture, \cite{Daneri2017,Isett2018}. So, each block is oriented on average in the direction of its momentum with unidirectional variations:
\begin{equation}\label{e:mikadoansatz}
\bu_{\alpha}(x,t)=v_{\alpha}(x+\cX_\a(t),t)\,\br_{\alpha} + \cV_\a(t) \qquad \text{for} \quad v_{\alpha}:\mathbb{R}^n\times \mathbb{R}^{+}\rightarrow \mathbb{R}, \quad \br_\a \in \S^{n-1},
\end{equation}
 put together in a cluster formation.  Figure~\ref{f:mikado} illustrates the cluster configuration in new variables.

\begin{theorem}\label{t:fast&slow} Consider initial Mikado cluster \eqref{e:mikadoansatz} with $(v_\a(0),\rho_\a(0)) \in H^{m}\times (L_{+}^{1}\cap H^{k})$ with $m \geq k+1 > \tfrac{n}{2}+2$, satisfying the threshold condition $e_{\alpha}(0)\geq 0$ for all $\alpha=1,\ldots,A$. Then there exists a global in time unique solution to system \eqref{e:CSmultiflocks} which retains the same form \eqref{e:mikadoansatz} and satisfies $(v_\a,\rho_\a) \in C_w([0,\infty); H^m \times (L^1_+ \cap H^{k}))$. Moreover,
	\begin{itemize}
		\item{\em [Fast local flocking].} 	Assuming that for a given $\alpha\in\{1,\ldots,A\}$ the $\alpha$-flock has compact support and the internal  kernel $\phi_{\alpha}$ has a fat tail, then there exists $\delta_{\alpha}(\l_\a, \phi_{\alpha},\rho_{\alpha}(0),\bu_{\alpha}(0))$ such that
		\begin{align*}
		\underset{x\in \Supp{ \rho_{\alpha}(\cdot,t)}}{\sup}\left[ |\bu_{\alpha}(x,t)-\mathcal{V}_{\alpha}(t)|+|\nabla \bu_{\alpha}(x,t)|+|\nabla^2 \bu_{\alpha}(x,t)|\right]&\lesssim e^{-\delta_{\alpha}t},\\
		|\rho_{\alpha}(\cdot,t)-\bar{\rho}_{\alpha}(\cdot-\mathcal{X}_{\alpha}(t))|_{C^{\gamma}}&\lesssim e^{-\delta_{\alpha} t} \qquad (0<\gamma<1).
		\end{align*}
		\item{\em [Slow global flocking].}	Suppose the inter-flock kernel $\Psi$ has a fat tail and the internal kernels $\phi_{\alpha} \geq 0$ are arbitrary. If the multi-flock has a finite diameter initially, then global alignment occurs at a rate $\delta(\Psi,\varepsilon,\rho_{\alpha}(0),\bu_{\alpha}(0))$ such that
		\begin{align*}
		\underset{\alpha=1,\ldots,A}{\underset{x\in \Supp  \rho_{\alpha}(\cdot,t)}{\sup}}\left[ |\bu_{\alpha}(x,t)-\mathcal{V}|+|\nabla \bu_{\alpha}(x,t)|+|\nabla^2 \bu_{\alpha}(x,t)|\right]&\lesssim e^{-\delta t},\\[-0.3 cm]
		|\rho_{\alpha}(\cdot,t)-\bar{\rho}_{\alpha}(\cdot-\mathcal{V}\,t) |_{C^{\gamma}}&\lesssim e^{-\delta t} \qquad (0<\gamma<1),
		\end{align*}
		where $\cV = \frac{1}{\mathcal{M}} \sum_{\a=1}^A \mathcal{M}_\a \cV_\a$ is the global momentum.
	\end{itemize}
\end{theorem}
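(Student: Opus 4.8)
The plan is to substitute the Mikado ansatz \eqref{e:mikadoansatz} into \eqref{e:CSmultiflocks} and show that it decouples the problem into $A$ independent copies of the unidirectional system \eqref{e:CSHansatz}, each carrying a mild damping term produced by the inter-flock coupling, and then to run the regularity and flocking machinery behind \thm{t:gwp} on each copy, checking at every step that the damping never spoils an estimate and in fact only helps. For the reduction: since $\mathcal{C}_{\phi_\a}$ annihilates additive constants and $\dot\cX_\a=\cV_\a$, $\dot\cV_\a=\varepsilon\sum_{\b\neq\a}\cM_\b\Psi(\cX_\a-\cX_\b)(\cV_\b-\cV_\a)$ is a closed discrete Cucker--Smale system for the global parameters, subtracting $\cV_\a$ from $\bu_\a$ and passing to the co-moving frame centered at $\cX_\a(t)$ cancels both the constant transport $\cV_\a\cdot\n$ and the $(\cV_\b-\cV_\a)$ part of the coupling (absorbed by $\dot\cV_\a$), leaving, for the profile $v_\a$ and the co-moving density $\rho_\a$ after rotating $\br_\a$ to $\be_1$,
\[
\p_t\rho_\a+\p_1(\rho_\a v_\a)=0,\qquad \p_t v_\a+\tfrac12\p_1(v_\a^2)=\l_\a\,\mathcal{C}_{\phi_\a}(v_\a,\rho_\a)-\varepsilon\psi_\a(t)\,v_\a,
\]
with $\psi_\a(t):=\sum_{\b\neq\a}\cM_\b\Psi(\cX_\a(t)-\cX_\b(t))>0$ bounded. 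Uniqueness applied to the component of $\bu_\a-\cV_\a$ orthogonal to $\br_\a$ (which solves a linear homogeneous transport equation) shows the ansatz is self-consistent in time; moreover $\int\rho_\a v_\a\,dx=0$, i.e. the co-moving profile has zero momentum.

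For global well-posedness, set $e_\a:=\p_1 v_\a+\l_\a\phi_\a\ast\rho_\a$. The same computation as in 1D gives $\p_t e_\a+\p_1(v_\a e_\a)=-\varepsilon\psi_\a(t)\p_1 v_\a$, i.e. $\dot e_\a=-(e_\a+\varepsilon\psi_\a)\p_1 v_\a$ along characteristics. The decisive point --- and what makes the result unconditional, in contrast to \thm{t:stability} --- is that the residual has a \emph{favorable} sign: where $e_\a=0$ one has $\p_1 v_\a=-\l_\a\phi_\a\ast\rho_\a\le0$, so $\dot e_\a=\varepsilon\psi_\a\l_\a\phi_\a\ast\rho_\a\ge0$ and $\{e_\a\ge0\}$ is invariant; together with $\phi_\a\ast\rho_\a\le\|\phi_\a\|_\infty\cM_\a$ and Gr\"onwall this bounds $e_\a$, hence $\p_1 v_\a$, and then $\|\rho_\a\|_\infty$, on finite time intervals. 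Local existence and uniqueness in $H^m\times(L^1_+\cap H^k)$ then follow from the energy estimates of \thm{t:gwp} --- the extra term $-\varepsilon\psi_\a v_\a$ is lower order with bounded coefficient --- and the $L^\infty$ control of $\n v_\a$ is the continuation criterion, yielding the asserted global solutions of Mikado form.

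For the fast local flocking statement, fix $\a$ with compact support and fat-tail $\phi_\a$. Tracking $M_\a(t)=\max_{\Supp\rho_\a}v_\a$ and $m_\a(t)=\min_{\Supp\rho_\a}v_\a$ along the characteristics $\dot X=v_\a(X,t)\be_1$, using zero momentum (so $M_\a\ge0\ge m_\a$) and $\mathcal{C}_{\phi_\a}(v_\a,\rho_\a)(x_+)\le-\underline{\phi}_\a\cM_\a M_\a$ where $\underline{\phi}_\a=\min_{|z|\le\bar D_\a}\phi_\a$, one gets $\tfrac{d}{dt}M_\a\le-(\l_\a\underline{\phi}_\a\cM_\a+\varepsilon\psi_\a)M_\a$ and the symmetric inequality for $m_\a$; since $\psi_\a\ge0$ the $\varepsilon$-term only helps, so the usual fat-tail bootstrap on $\cA_\a=M_\a-m_\a$ and $\diam\Supp\rho_\a$ gives a uniform $\bar D_\a$ and $\cA_\a(t)\lesssim e^{-\delta_\a t}$ with $\delta_\a$ depending only on $\l_\a,\phi_\a,\rho_\a(0),\bu_\a(0)$, hence the decay of $|\bu_\a-\cV_\a|$ after translating back. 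For the derivatives I would follow the strong-flocking argument of \thm{t:gwp} (and \cite{ST2,ST-multi,LS-entropy}): given $\int_0^\infty\cA_\a<\infty$ and the entropy bounds, the Riccati-type equation for $\p_1 v_\a$ along characteristics (and its differentiated version for $\p_1^2 v_\a$, with energy estimates handling the remaining components of $\n v_\a,\n^2 v_\a$) forces exponential decay of $\n\bu_\a,\n^2\bu_\a$; interpolating the uniform $W^{1,\infty}$ bound on $\rho_\a$ against its $L^\infty$-convergence to the stationary co-moving profile $\bar\rho_\a$ gives the $C^\gamma$ convergence $\rho_\a(t)\to\bar\rho_\a(\cdot-\cX_\a(t))$.

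For the slow global flocking statement, when $\Psi$ has a fat tail the discrete system for $(\cX_\a,\cV_\a)$ flocks unconditionally, so $|\cX_\a-\cX_\b|\le\bar D$, $\psi_\a(t)\ge\sum_{\b\neq\a}\cM_\b\Psi(\bar D)=:c_\a>0$ for all $t$, and $|\cV_\a-\cV|\lesssim e^{-\delta t}$ with $\delta=\delta(\Psi,\varepsilon,\ldots)$. Even with $\phi_\a\ge0$ arbitrary, the same extremal-point computation --- now using only the damping --- yields $\|v_\a\|_{L^\infty(\Supp\rho_\a)}\lesssim e^{-\varepsilon c_\a t}$, hence $\cA_\a$ integrable, each support bounded relative to $\cX_\a$, the whole multiflock confined to a bounded region, and $|\bu_\a-\cV|\le|v_\a|+|\cV_\a-\cV|\lesssim e^{-\delta t}$ after relabelling $\delta$; the derivative decay and the density convergence $|\rho_\a(t)-\bar\rho_\a(\cdot-\cV t)|_{C^\gamma}\lesssim e^{-\delta t}$ follow as before, using $\cX_\a(t)=\cX_\a(0)+\int_0^t\cV_\a\to\cX_\a^\infty+\cV t$ at rate $e^{-\delta t}$. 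The genuine difficulty throughout is the strong-flocking step --- propagating exponential decay from the scalar amplitude $\cA_\a$ to $\n\bu_\a$, $\n^2\bu_\a$ and to the density in $C^\gamma$ --- where one must carry the time-dependent damping coefficient $\psi_\a(t)$, which in the fast-local regime need not decay, through the Riccati and energy estimates and verify it never degrades the rate.
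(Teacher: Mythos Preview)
Your proposal is correct and follows essentially the same route as the paper: pass to the co-moving frame to decouple into $A$ unidirectional systems with the extra damping term $-\varepsilon\psi_\a(t)v_\a$, observe that the modified entropy satisfies $\ddt e_\a=(e_\a+\varepsilon\psi_\a)(\l_\a\phi_\a\ast\rho_\a-e_\a)$ so that $e_\a\ge0$ is preserved and bounded, invoke the continuation criterion for global existence, and then rerun the strong-flocking analysis of \thm{t:gwp} noting that the damping only helps. The paper is terser on the flocking steps (it cites \cite{ST-multi} for alignment and simply says ``the same analysis as in the mono-flock case applies''), whereas you spell out the extremal-point computations and correctly isolate the one place requiring care --- carrying $\psi_\a(t)$ through the gradient estimates --- but since the extra term always enters with the damping sign this is indeed routine.
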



\noindent
{\em Organization.} In \sect{s:lwp} we prove a local existence  and a continuation criterium result in Sobolev spaces with minimal requirements needed for what follows. In \sect{s:multiD}, as a direct application of the continuation criterium, we obtain a global existence result for unidirectional parallel motion, and provide higher order control estimates on solutions to prove a strong flocking result.
In \sect{s:stability} we study a global existence result for almost unidirectional motion, and show its stability on $\mathbb{T}^{n}$.
Finally, in \sect{s:mikado} we discuss construction of Mikado clusters.

\section{Local existence and continuation criterion}\label{s:lwp}
In this section we build a local well-posedness theory and prove a new continuation criterion  for the Euler Alignment system \eqref{e:CSHydro} in classes suitable for subsequent flocking analysis.  We assume throughout that $\phi$ is sufficiently smooth to take as many derivatives as necessary in the course of our arguments below. We also assume that $\phi = \phi(x-y)$ is of convolution type and that the environment  is $\R^n$. The exact same results carry over to $\T^n$ ad verbatim.

\begin{theorem}[Local existence of classical solutions]\label{t:locsmooth}
	Assume that  $m \geq k+1> \frac{n}{2} + 2$ and $(u_0,\rho_0) \in H^m \times (H^k \cap L^1_+)$. Then there exists time $T_0 =T_0( |\n \bu_0|_\infty^{-1},\cM)$ and a unique solution to \eqref{e:CSHydro} on time interval $[0,T_0)$ in the class
	\begin{equation}\label{e:localclass}
	(\bu,\rho) \in C_w([0,T_0); H^m \times (H^k\cap L^1_+)) \cap \Lip ([0,T_0); H^{m-1} \times (H^{k-1} \cap L^1_+))
	\end{equation}
	satisfying the given initial condition. Moreover, any classical local solution on $[0,T_0)$ in class \eqref{e:localclass} and satisfying 
	\begin{equation}\label{e:BKMsmooth}
	\int_0^{T_0} \inf_{x\in \R^n} \n \cdot \bu(t,x) \dt > - \infty,
	\end{equation}
	can be extended beyond $T_0$.
\end{theorem}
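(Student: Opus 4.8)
The plan is to combine a standard energy method for local existence with a BKM-type continuation criterion adapted to the alignment term. First I would mollify the system — say by replacing the convolutions and the transport term with their Friedrichs regularizations $\bu^\e = \bu \ast \chi_\e$ — and solve the resulting ODE in the Banach space $H^m \times H^k$ by Picard iteration, using that $H^m$ is an algebra for $m > n/2$ and that $\phi$ is smooth and bounded; the local time of existence of the mollified problems will be controlled from below uniformly in $\e$ via the a priori estimates in the next step. One must carry $\rho$ in $H^k \cap L^1_+$: positivity is preserved because the density equation $\p_t \rho + \n\cdot(\rho\bu) = 0$ is transport with a smooth (divergence-bounded) velocity, and the $L^1$ norm $\cM$ is exactly conserved.

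The core is the a priori estimate. Apply $\p^\a$ for $|\a| \le m$ to the velocity equation and pair with $\p^\a \bu$ in $L^2$: the transport commutator $[\p^\a, \bu\cdot\n]\bu$ is handled by the Kato–Ponce / Moser estimate $\||\n^m(\bu\cdot\n\bu) - \bu\cdot\n\n^m\bu\||_2 \lesssim |\n\bu|_\infty \|\bu\|_{H^m}$, while $\int \bu\cdot\n \p^\a\bu \cdot \p^\a\bu = -\tfrac12\int (\n\cdot\bu)|\p^\a\bu|^2 \lesssim |\n\bu|_\infty\|\bu\|_{H^m}^2$. For the alignment term, use the commutator form \eqref{e:alignmentterm}: $\cC_\phi(\bu,\rho) = \phi\ast(\rho\bu) - \bu(\phi\ast\rho)$, so $\p^\a \cC_\phi$ produces (i) $(\phi\ast(\rho\,\p^\a\bu))$-type terms, bounded by $|\rho|_\infty \cM^{1/2}\|\phi\|_\infty$-style factors times $\|\bu\|_{H^m}$, harmless since they gain no derivative; (ii) terms with derivatives landing on $\rho$, bounded by $\|\phi\|_{W^{m,\infty}} \|\rho\|_{H^k}$ (using $k+1 \le m$, so $|\a| \le m$ derivatives on $\rho$ in a convolution cost only $k$ genuine derivatives plus smoothness of $\phi$ — here I would be slightly careful about the index bookkeeping, see below); (iii) the damping term $-\bu(\phi\ast\rho)$ whose top-order part is $-(\p^\a\bu)(\phi\ast\rho)$, giving a sign-definite or lower-order contribution. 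In parallel, differentiate the density equation and estimate $\tfrac{d}{dt}\|\rho\|_{H^k}^2 \lesssim (|\n\bu|_\infty + \|\bu\|_{H^{k+1}})\|\rho\|_{H^k}^2 \lesssim (1+\|\bu\|_{H^m})\|\rho\|_{H^k}^2$. Setting $Y(t) = \|\bu\|_{H^m}^2 + \|\rho\|_{H^k}^2 + \cM$, one obtains $\dot Y \lesssim P(Y)$ for a polynomial $P$, hence a lower bound $T_0 \gtrsim$ (a function of $\|\bu_0\|_{H^m}, \|\rho_0\|_{H^k}, \cM$); to get the sharper claim $T_0 = T_0(|\n\bu_0|_\infty^{-1},\cM)$ one sharpens the estimate to $\dot Y \lesssim (|\n\bu|_\infty + C(\cM))\,Y$ — the only genuinely destabilizing term in the $\bu$-equation is the transport commutator, which is linear in $Y$ with coefficient $|\n\bu|_\infty$, and $|\n\bu|_\infty \lesssim \|\bu\|_{H^m}$ lets one run a Grönwall-type bootstrap on $Z(t) = |\n\bu(t)|_\infty$ closing on the time scale $|\n\bu_0|_\infty^{-1}$. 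Uniqueness follows by an $L^2\times L^2$ energy estimate on the difference of two solutions, again using the commutator structure to avoid derivative loss.

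For the continuation criterion: suppose a solution exists on $[0,T_0)$ in class \eqref{e:localclass} with $\int_0^{T_0}\inf_x \n\cdot\bu(t,x)\,dt > -\infty$. The key observation is that the density stays bounded: along characteristics $\dot X = \bu(X,t)$ one has $\tfrac{d}{dt}\rho(X(t),t) = -(\n\cdot\bu)\rho$, so $|\rho(t)|_\infty \le |\rho_0|_\infty \exp\big(-\int_0^t \inf_x \n\cdot\bu\,ds\big) \le C$ uniformly on $[0,T_0)$. Then $|\phi\ast\rho|_\infty \le \|\phi\|_\infty \cM$ and $|\phi\ast(\rho\bu)|_\infty \lesssim \cM |\bu|_\infty$ are bounded, and more importantly the alignment term contributes no derivative growth; the only mechanism for blow-up of $\|\bu\|_{H^m}$ is the transport term, controlled by $\int_0^{T_0}|\n\bu(t)|_\infty\,dt$. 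So it suffices to bound $\int_0^{T_0}|\n\bu|_\infty\,dt$. Differentiate the velocity equation once: $w := \n\bu$ satisfies $\p_t w + \bu\cdot\n w = -w\cdot w + \n\cC_\phi(\bu,\rho)$, and $\n\cC_\phi(\bu,\rho) = \phi\ast(\rho\,\n\bu) + (\n\phi)\ast(\rho\bu) - \n\bu(\phi\ast\rho) - \bu\,(\n\phi\ast\rho)$, all of whose terms are bounded in $L^\infty$ by $C(\cM,|\rho_0|_\infty,\phi)(1 + |\bu|_\infty + |\n\bu|_\infty)$. Since $|\bu|_\infty$ is controlled (the maximum principle / the commutator form shows $|\bu(t)|_\infty$ does not grow — indeed for \eqref{e:CSHydro} the velocity range is non-expanding), a Grönwall argument on $|\n\bu(t)|_\infty$ gives a finite bound on $[0,T_0)$, hence $\int_0^{T_0}|\n\bu|_\infty\,dt < \infty$. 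Feeding this into the higher-order energy estimate of the previous step yields $\sup_{[0,T_0)}\|\bu\|_{H^m} + \|\rho\|_{H^k} < \infty$, so the local existence theorem restarts from time $T_0 - \eta$ and extends the solution past $T_0$.

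The main obstacle I anticipate is the derivative bookkeeping in the alignment term at top order: one must verify that $m \ge k+1 > n/2 + 2$ is exactly enough regularity so that (a) $H^m$-estimates on $\bu$ only ever see $\rho$ through $H^k$ (never $H^m$), which works because in $\partial^\a(\phi\ast(\rho\bu))$ any derivatives beyond order $k$ can be thrown onto the smooth kernel $\phi$, and (b) the density transport estimate needs $\bu \in H^{k+1} \hookrightarrow W^{1,\infty}\cap (\text{multiplier on }H^k)$, which is why $m \ge k+1$ and $k > n/2 + 2 > n/2 + 1$ appear. The other delicate point is making the lower bound on $T_0$ depend only on $|\n\bu_0|_\infty^{-1}$ and $\cM$ rather than on the full $H^m$ norm — this requires isolating $|\n\bu|_\infty$ as the sole superlinear coefficient in the energy inequality and running the continuation-type bound on $|\n\bu|_\infty$ simultaneously with (not after) the $H^m$ estimate. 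Neither obstacle is serious; both are by now standard in the Euler-alignment literature, and I would only sketch the Moser/Kato–Ponce steps and give the alignment-term estimate in full since that is where the non-local structure enters.
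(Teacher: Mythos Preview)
Your local-existence and energy-estimate steps are essentially the paper's, only with Friedrichs mollification in place of the paper's viscous regularization; that substitution is harmless. The gap is in your continuation argument.

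You write the Eulerian equation for $w=\nabla\bu$,
\[
\partial_t w + \bu\cdot\nabla w = -\,w\cdot w + \nabla\cC_\phi(\bu,\rho),
\]
bound $\nabla\cC_\phi$ linearly in $|\nabla\bu|_\infty$, and then invoke ``a Gr\"onwall argument on $|\nabla\bu(t)|_\infty$''. But the term $-w\cdot w$ is genuinely quadratic: evaluating at the extremum of a component one only gets
\[
\frac{d}{dt}|\nabla\bu|_\infty \le |\nabla\bu|_\infty^2 + C(\cM,|\bu_0|_\infty)\,(1+|\nabla\bu|_\infty),
\]
a Riccati inequality, not a linear one. This is exactly the Burgers mechanism, and nothing in the hypothesis $\int_0^{T_0}\inf_x\nabla\cdot\bu\,dt>-\infty$ (which you have only used to bound $|\rho|_\infty$) tames it. Gr\"onwall gives a bound on a time interval of order $|\nabla\bu_0|_\infty^{-1}$, which is how the paper obtains the \emph{local} time $T_0$, but it cannot propagate a bound up to an arbitrary $T_0$.

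The paper closes this by passing to Lagrangian coordinates. Writing $X(t,\alpha)$ for the flow map and $\bv(t,\alpha)=\bu(X(t,\alpha),t)$, the system for $(\nabla_\alpha X,\nabla_\alpha \bv)$ is \emph{linear} in these quantities (the quadratic term disappears into the chain rule, and the alignment integral contributes only $|\nabla X|$ times the bounded velocity amplitude), so $|\nabla X|_\infty+|\nabla \bv|_\infty\le C_1 e^{C_2 t}$. One then recovers $\nabla\bu = (\nabla X)^{-\top}\nabla\bv$, and the inverse is controlled by $|\nabla X|^{n-1}/\det\nabla X$ with
\[
\det\nabla X(\alpha,t)=\exp\Big\{\int_0^t \nabla\cdot\bu(X(\alpha,s),s)\,ds\Big\}\ge \exp\Big\{\int_0^t \inf_x\nabla\cdot\bu\,ds\Big\}.
\]
This is precisely where condition \eqref{e:BKMsmooth} enters, and it yields a uniform bound on $|\nabla\bu|_\infty$ over $[0,T_0)$, after which your (correct) reduction to the criterion $\int_0^{T_0}|\nabla\bu|_\infty\,dt<\infty$ finishes the job.
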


One can obtain local existence rather easily for a viscous regularization:
\begin{equation}\label{e:CSHvisc}
\left\{
\begin{split}
\rho_t + \n \cdot (\rho \bu) & =  \e \D \rho, \\
\bu_t + \bu \cdot \n \bu &= \phi\ast( \bu \rho) - \bu\  (\phi \ast \rho) + \e \D \bu.
\end{split}\right.
\end{equation}
Indeed, denoting  $Z = (\bu, \rho)$ and we consider the equivalent mild formulation of \eqref{e:CSHvisc}
\[
Z(t)   = e^{\e t \D} Z_0 + \int_0^t  e^{\e (t-s) \D} \cN(Z(s)) \ds,
\]
where $\cN(Z)$ denotes all the nonlinear terms in \eqref{e:CSHvisc}.  The argument goes by the standard contractivity argument. Let us fix $Z_0 \in H^{m} \times (H^{k}\cap L_+^1): = X$ and consider the map
\[
\cT[Z](t) = e^{\e t \D} Z_0 + \int_0^t  e^{\e (t-s) \D} \cN(Z(s)) \ds.
\]
The argument to show that $T$ maps contractively $C([0,T); B_1(Z_0))$ into itself is elementary and relies on analyticity  of the heat semigroup:
\[
\| \n e^{\e t \D} f \|_{L^p} \lesssim \frac{1}{\sqrt{\e t}} \|f \|_{L^p}, \quad 1\leq p\leq \infty.
\]
So, by the fixed point argument we obtain a local solution on a time interval dependent on $\e$.  Denoting $T^*$ the maximal time of existence in $C([0,T); X)$ we show that $T^*$ depends only on the $X$-norm of the initial condition. We do it by establishing a priori estimates that are independent of $\e$ and  which  will allow us to pass to the limit of vanishing viscosity.  

So, the grand quantity we are trying to control is
\[
Y_{m,k} := \|\bu\|_{H^{m}}^2+  \|\rho\|_{H^k}^2 + \|\rho\|^2_1.
\]
To start with, we write the continuity equation as
\[
\rho_t + \bu \cdot \n \rho + (\n\cdot \bu) \rho =0.
\]
So, testing with $\p^{2k}\rho$ we obtain
\[ 
\ddt \|\rho\|_{\dH^k}^2 = \int (\n\cdot \bu) |\p^k \rho|^2 \dx - \int( \p^k(\bu \cdot \n \rho) - \bu \cdot \n \p^k \rho) \p^k \rho \dx - \int  \p^k ((\n\cdot \bu) \rho) \p^k \rho \dx - \e \|\rho\|^2_{H^{k+1}}.
\]
We dismiss the last term. Recalling the classical commutator estimate
\begin{equation}\label{e:classcomm}
\| \p^{k}(fg) - f \p^{k} g \|_2 \leq |\n f|_\infty \|g\|_{\dH^{k-1}} + \| f\|_{\dH^{k}} |g|_\infty,
\end{equation}
we obtain
\begin{multline*}
\ddt \|\rho\|_{\dH^k}^2 \leq  |\n \bu|_\infty  \|\rho\|_{\dH^k}^2 +  \|\bu\|_{\dH^k}  \|\rho\|_{\dH^k} |\n \rho|_\infty +  \|\bu\|_{\dH^{k+1}}  \|\rho\|_{\dH^k} |\rho|_\infty \\
\leq  C ( |\n \bu|_\infty  + |\n \rho|_\infty+ |\rho|_\infty) Y_{m,k},
\end{multline*}
provided $m \geq k+1$. The $L^2$-norm of $\rho$ obeys a similar estimate trivially, and the $L^1$-norm is conserved.

For the velocity equation we apply the same commutator estimate for the material derivative part:
\begin{multline*}
\int \p^{m} (\bu \cdot \n \bu) \p^{m} \bu \dx  = - \int \n \cdot \bu |\p^{m} \bu|^2 \dx + \int  [\p^{m} (\bu \cdot \n \bu) -(\bu \cdot \n \p^{m} \bu) ] \p^{m} \bu \dx\\ \lesssim |\n \bu|_\infty \|u\|_{\dH^{m}}^2.
\end{multline*}
For the alignment term, we can put all the derivatives onto the kernel whenever possible and the only term that is left out is $|\p^{m} \bu|^2 \, (\phi \ast \rho) $ with $\phi \ast \rho$ clearly bounded by $|\phi|_\infty \mathcal{M}$, a priori conserved quantity.   So, we obtain
\begin{equation*}\label{e:uapriori}
\ddt \|\bu\|_{\dH^{m}}^2 \leq (|\n \bu|_\infty + C(|\phi|_{C^m}, \mathcal{M}))\|\bu\|_{\dH^{m}}^2.
\end{equation*}
The similar bound for $\ddt \|\bu\|_{2}^2$ is derived trivially. So, we obtain
\begin{equation}\label{e:uapriori}
\ddt \|\bu\|_{H^{m}}^2 \leq (|\n \bu|_\infty +C(|\phi|_{C^m}, \mathcal{M})) \|\bu\|_{H^{m}}^2.
\end{equation}
It is important to note that this bound is independent of the higher norms of the density.
Combining the two equations we obtain
\begin{equation}\label{e:Yaux}
\ddt Y_{m,k} \leq C ( |\n \bu|_\infty  + |\n \rho|_\infty+ |\rho|_\infty + C(|\phi|_{C^m}, \mathcal{M})) Y_{m,k}.
\end{equation}
Of course $ |\n \bu|_\infty  + |\n \rho|_\infty+ |\rho|_\infty \leq Y_{m,k}$ provided $k> \frac{n}{2}+ 1$, which adds the last restriction on the exponents for the argument to work. So, if $m \geq k+1> \frac{n}{2} + 2$, then
\[
\ddt Y_{m,k} \leq C_1 Y_{m,k}+ C_2  Y^2_{m,k}.
\]
Solving the inequality gives a uniform bound  on a time interval inversely proportional to $\|Z_0\|_X$, but independent of $\e$. Thus, solutions to \eqref{e:CSHvisc} with the same initial data exist on a common time interval $[0,T_0]$ where they are uniformly bounded in $C([0,T_0];X)$. 

Let us also note that keeping the dissipative terms in the estimates above also shows that 
\[
\e \int_0^{T_0} ( \|\rho(s)\|^2_{H^{k+1}} +  \|\bu(s)\|^2_{H^{m+1}} ) \ds < C,
\]
where $C$ is independent of $\e$. Then 
\[
\left\| Z_t \right\|_{L^2} \leq \|Z\|_X^2 + \e \|Z\|_{H^2} \leq \|Z\|_X^2 + \e \|Z\|_{H^{k+1}\times H^{m+1}}.
\]
So, $ Z_t \in L^2([0,T_0];L^2)$. Passing to a subsequence we find a weak limit $Z_\e \to Z$ in $L^\infty([0,T_0];X)$ and  $ (Z_\e)_t \to Z_t$ in $L^2([0,T_0];L^2)$  (technically, a limit in $L^1$ may end up being a measure of bounded variation, however as a member of $H^k$ it is absolutely continuous, hence in $L^1$). Since we have that $Z_t \in L^2([0,T_0];L^2)$, $Z$ is weakly continuous with values in $L^2$. Since $L^2$ is dense in $H^{-m}$ and $H^{-k}$ this implies weak continuity $Z \in C_w([0,T_0]; H^m \times H^k)$. Strong continuity of the density follows from the equations itself:
\[
\| \rho_t \|_{L^1} \leq \|  \rho \n \bu\|_1 + \|\bu \n \rho\|_1 \leq \|Z\|_X^2 <C.
\]
Further regularity in time $Z_t$ follows from measuring smoothness of the system one level down and performing similar product estimates as above.

Having established local existence in $X$ let us come back to \eqref{e:Yaux} and notice that this solution can in fact be extended beyond $T_0$ if we knew 
\begin{equation}\label{e:BKM}
 \int_0^{T_0} |\n \bu(t)|_\infty \dt < \infty.
\end{equation}
Indeed,  \eqref{e:uapriori} implies that $\|\bu\|_{H^{m}}^2$ remains  bounded, and hence so is $|\n^2 \bu|_\infty$ since $m>\frac{n}{2}+ 2$. The norm $|\rho|_\infty$ can be bounded by solving the continuity equation along characteristics
\begin{equation}\label{e:rhochar}
\rho(X(t,\a),t) = \rho(\a,0) \exp\left\{- \int_0^t \n \cdot \bu(X(s,\a),s) \ds \right\}.
\end{equation}
Bootstrapping further by differentiating the continuity equation we bound $|\n \rho|_\infty$ in a similar fashion. This is sufficient to conclude that \eqref{e:Yaux} has a bounded factor on the right hand side.  

Having continuation criterion \eqref{e:BKM} we can determine the minimal time interval of existence by writing an equation for  $|\n \bu|_\infty$.  First, by the maximum principle, $|\bu(t) |_\infty \leq |\bu_0|_\infty$. Writing equation for one component $\p_i u_j$ we have
\[
\p_t \p_i u_j + \bu \cdot \n \p_i u_j + \p_i \bu \cdot \n u_j = \p_i \phi \ast (u_j \rho) - \p_i u_j (\phi \ast \rho) - u_j  (\p_i \phi \ast \rho ).
\]
Evaluating at the maximum and minimum and adding up over $i,j$ we obtain
\[
\ddt |\n \bu|_\infty \leq |\n \bu|_\infty^2 + C \mathcal{M} |\bu|_\infty + \mathcal{M} |\n \bu|_\infty.
\]
Hence, $|\n \bu|_\infty$ is uniformly bounded a priori on a time interval depending only on $|\n \bu_0|_\infty^{-1}$. So, the continuation criterion allows to extend our local solution up to the time $T_0 =T_0( |\n \bu_0|_\infty^{-1},\cM)$.

To relax \eqref{e:BKM} to \eqref{e:BKMsmooth} we first discuss Lagrangian formulation. Let us consider characteristics of the solution:
\[
\ddt X(t,\a) = \bu(X(t,\a),t), \quad X(0,\a) = \a.
\]
Denote $\bv(t,\a) = \bu(X(t,\a),t)$. Then \eqref{e:alignmentterm} can be written as
\begin{equation}
	\ddt \bv(t,\a) = \int_{\R^n} \phi(X(t,\a) - X(t,\b)) (\bv(t,\a) - \bv(t,\b)) \rho_0(\b) \db,
\end{equation}
here we used the transport property of the mass measure $\rho(t,\b) \db$. Let us now write the system for the deformation tensor of the flow map $(X,v)$. We have
\[
\ddt |\n X(t,\a)| \leq  |\n \bv(t,\a)|,
\]
and
\begin{multline*}
\ddt \n \bv(t,\a)  = \int_{\R^{n}} \n^\top X(t,\a) \n \phi(X(t,\a) - X(t,\b)) \otimes (\bv(t,\b) - \bv(t,\a) ) \rho_0(\b) \db  \\
- \n  \bv(t,\a)  \int_{\R^{n}}  \phi(X(t,\a) - X(t,\b)) \rho_0(\b)\db .
\end{multline*} 
By the maximum principle the amplitude $\sup_{\a,\b}|\bv(t,\b) - \bv(t,\a) |$ is decreasing. So,
\[
\ddt | \n \bv(t,\a) | \leq  C_1 |\n X(t,\a)| + C_2 | \n \bv(t,\a) | .
\]
This implies  exponential bound
\[
 | \n \bv(t) |_\infty + | \n X(t) |_\infty  \leq C_1 e^{C_2 t}.
\]
To obtain a bound in $|\n u|_\infty$ we invert back to labels 
\[
\n u(x,t) = \n^{-\top} X(X^{-1}(x,t),t) \n v(X^{-1}(x,t),t).
\]
Using that 
\[
| \n^{-\top} X(\a,t) | \leq \frac{C_1}{\inf_\a  |\det \n X(\a,t)|} e^{n C_2 t}
\]
we recall the Liouville formula
\[
\det \n X(\a,t) = \exp\left\{ \int_0^t \n \cdot \bu(X(\a,t),t) \dt \right\}.
\]
So, condition \eqref{e:BKMsmooth}  guarantees that the exponential will stay away from zero. This establishes uniform bound on $|\n \bu(t) |_\infty$.

\section{Existence and flocking of unidirectional solutions}\label{s:multiD}
Let us consider the following entropy for general multi-dimensional solutions of \eqref{e:CSHydro}:
\[
e = \n \cdot \bu + \phi \ast \rho.
\]
The equation for this quantity reads
\begin{equation}\label{e:eq}
\partial_t e+\n \cdot(\bu  e)=(\n \cdot \bu )^{2}-\Tr[(\nabla \bu )^{2}].
\end{equation}
It was derived in 2D in \cite{HeT2017}, and appeared briefly in the context of small initial data result in \cite{Shv2018}. Let us derive it in general for the sake of completeness. 

Since $\phi$ is a convolution kernel, we have that
\begin{equation}\label{e:eqaux1}
\partial_t (\phi \ast \rho)+\n \cdot \left[\phi \ast (\rho \, \bu)\right]=0.
\end{equation}
Taking the divergence of the velocity equation, we obtain
\begin{equation}\label{e:eqaux2}
\partial_t (\n \cdot \bu )+\n \cdot \left[ (\bu \cdot\n)\bu \right]=\n \cdot \left[\phi\ast (\rho  \bu ) \right]-\n \cdot\left[(\phi \ast \rho)\bu \right]
\end{equation}
with
$$\n \cdot[(\phi \ast\rho) \bu ]=(\phi \ast\rho) \n \cdot \bu + (\bu \cdot \n ) (\phi \ast\rho)$$
and
$$\n \cdot [( \bu \cdot\nabla) \bu ]=\Tr[(\nabla  \bu )^{2}]+ (\bu \cdot \n)(\n\cdot \bu)  .$$
On one hand, combining (\ref{e:eqaux1}) and (\ref{e:eqaux2}), we obtain that
\begin{equation}\label{e:eqaux3}
\partial_t e + (\phi \ast \rho) \n \cdot \bu + \bu \cdot\nabla e +\Tr[(\nabla  \bu )^{2}]=0.
\end{equation}
On the other hand, adding and subtracting now $(\n \cdot \bu)^2$ in \eqref{e:eqaux3} produces \eqref{e:eq}.
It is clear that in 1D the right hand side of \eqref{e:eq} vanishes, and one obtains a clean continuity law:
\begin{equation}\label{e:eclean}
	\partial_t e+\n \cdot(\bu  e) = 0.
\end{equation}
The main crucial observation we make here is that for the  unidirectional motion given by
\[
\bu(x,t)=u(x,t)\,\bd, \quad \bd \in \S^{n-1},
\]
the same cancelation occurs for the corresponding entropy
\[
e = \bd \cdot \n u + \phi \ast \rho,
\]
which satisfies the same equation \eqref{e:eclean}.

This opens  the possibility of addressing global existence in a way similar to the well-known one-dimensional case.

\begin{proof}[Proof of \thm{t:gwp}] Let us rewrite the $e$-equation as a non-autonomous logisitic ODE along characteristics:
\begin{equation}\label{e:log}
\ddt e =e (\phi \ast\rho-e ).
\end{equation}
It is clear that if $e_0(x_0) < 0$ for some $x_0$  then $e(t)<0$ along this particular characteristics, and hence we have 
\[
\ddt e =e (\phi \ast\rho-e ) < -e^2.
\]
This leads to blow up in finite time. 

On the other hand, if $e_0(x) \geq 0$ for all $x$, then $e(t,x) \geq 0$ remain so for all $t$ and $x$.  But then $\bd \cdot \n u  \geq - (\phi \ast \rho) \geq - |\phi|_\infty \cM$. This fulfills the continuation criterion \eqref{e:BKMsmooth} and the proof is complete.
\end{proof}

\begin{remark}
	To put our solutions in the context of the spectral dynamics approach of He and Tadmor \cite{HeT2017},
	we let $S_0:=\tfrac{1}{2}\left\lbrace \nabla \bu_0 +\nabla \bu_0^t \right\rbrace$ be the symmetric part of the velocity gradient of the initial field with eigenvalues $\mu_i=\mu_i(S_0)$. For our field it is given by $S_0 = \frac12( \bd \otimes \n u_0 + \n u_0 \otimes \bd)$. So, in dimension $2$ the spectral gap is given simply by
	\[
	\mu_2(S_0)-\mu_1(S_0) = | \n u_0|.
	\]
	This quantity, of course, can be arbitrarily large. Consequently, our solutions do not fulfill the threshold condition of \cite{HeT2017}.
\end{remark}

Next we address flocking  behavior. Due to rotational invariance of the system \eqref{e:CSHydro} one can assume without loss of generality that $\bd$ points in the direction of the $x_1$-axis:
\begin{equation}\label{e:ansatz(u)}
\bu(x ,t)=\langle u(x,t),0,\ldots,0\rangle \qquad \text{for} \quad u:\mathbb{R}^n\times \mathbb{R}^{+}\rightarrow \mathbb{R}.
\end{equation}
Let us now rewrite system \eqref{e:CSHydro} for the specific  ansatz \eqref{e:ansatz(u)} at hand:
\begin{equation}\label{e:uniCS}
(x,t)\in\mathbb{R}^n\times\mathbb{R}^{+}\qquad \left\{
    \begin{array}{@{} l l @{}}
      \partial_t \rho(x,t) +\partial_1(\rho\, u)(x,t)\hspace{-0.3 cm}&=0, \\
      \partial_t u(x,t)+ u(x,t)\partial_1 u(x,t) \hspace{-0.2 cm}&=\phi \ast(\rho u)(x,t)-u(x,t)(\phi \ast\rho)(x,t).
    \end{array}\right. 
\end{equation}
The entropy (\ref{entropy}) takes the form $e=\partial_1 u +\phi \ast \rho$ and the equation \eqref{e:eq} reduces to 
\[
\partial_t e+\partial_1(u e)=0.
\]
By the general result in multi-D proved in \cite{TT2014}, we have exponential alignment and flocking for any fat tail communication \eqref{e:ft}:
\begin{equation}\label{e:DA}
\begin{split}
\mathcal{D}(t)&\leq \bar{\cD}<\infty \qquad \text{where }\quad \mathcal{D}(t):=\underset{\left\lbrace x, y \right\rbrace\in\Supp \rho(\cdot,t)}{\text{max}}|x- y | \\
\cA(t) &\leq \cA_0 e^{-\d t}.
\end{split}
\end{equation}
Next, we complement this general result by a strong flocking statement of \thm{t:gwp}. This has so far been a 1D specific result, see \cite{ST2,ST3}, but we can extend it to the general multidimensional oriented flows and the use of the same entropy conservation. The technical issue in applying the 1D strategy is that, again, $e$ only controls $\p_1 u$. We will present a boostraping argument to extend such control to the full gradient and Hessian. 

\begin{proof}[Proof of strong flocking]

We know from \eqref{e:DA} that the diameter of the flock $\mathcal{D}(t)$ will remain finite. Then we can estimate the convolution from below:
\begin{equation}\label{e:actionlow}
(\phi\ast\rho)(\cdot,t)\geq \phi(\mathcal{D}(t))\cM\geq\phi(\bar{\cD})\cM,
\end{equation}
where we replace the kernel with its smallest value, which is $\phi(\bar{\cD})$. Using the logistic equation along characteristics obtained in \eqref{e:log} together with the previous bound \eqref{e:actionlow} we obtain the following ODI:
$$\ddt e=e(\phi \ast\rho-e)\geq e\left(\phi(\bar{\cD})\cM-e\right).$$
Since $e_0$ is uniformly bounded from above and away from zero, it follows that solving the logistic ODI there exists a
time $t^{\star} > 0$ such that   $e(x,t)\geq\frac{1}{2}\phi(\bar{\cD})\cM$ for all $x\in \Supp \rho(\cdot,t)$ and $t> t^{\star}$.

With this in mind let us write another equation for $\partial_i u$ in the following form
$$\partial_t \partial_i u+\partial_i u \partial_1 u +u \partial_i\partial_1 u=\partial_i\phi \ast(\rho u)-\partial_i u(\phi \ast\rho)-u(\partial_i\phi \ast\rho),$$
or along characteristics
\begin{align}\label{e:flockinggradu}
\ddt \partial_i u &=\partial_i\phi \ast(\rho u)-u(\partial_i\phi \ast\rho)-\left(\partial_1 u +\phi \ast\rho\right)\partial_i u \nonumber\\
&=\int_{\R^n} \partial_i \phi(|x- y |)\left(u( y )-u(x)\right)\rho( y )\,d y -e\,\partial_i u.
\end{align}
We already know from \eqref{e:DA} that the velocity fluctuations $\mathcal{A}(t)$ are exponentially decaying. Hence, the integral above will be bounded by $|\partial_i \phi|_{{\infty}}\cM E(t)$ where in what follows we denote by $E(t)$ a generic exponentially decaying quantity.

Recall that for large enough time, $t > t^{\star}$ we have the positive boundedness $e(x,t)\geq\frac{1}{2}\phi(\bar{\cD})\cM>0$ for all $x\in \text{Supp} \,\rho(\cdot,t)$. Then, evaluating (\ref{e:flockinggradu}) at
the maximum over $\Supp  \rho(\cdot,t)$ we obtain
$$\p_t |\p_i u|_{L^{\infty}(\Supp \rho(\cdot,t))}\leq E(t)-\tfrac{1}{2}\phi(\bar{\cD})\cM |\p_i u|_{L^{\infty}(\Supp \rho(\cdot,t))}.$$
This readily implies the desired result by integration, since at any time $t$ characteristics cover  $\Supp  \rho(\cdot,t)$ we arrive at the desired bound $|\partial_i u|_{L^{\infty}(\Supp \rho(\cdot,t))}\leq E(t)$ for $i=1,\ldots,n$. 

Moving on to the next level, let us note that $u \in W^{2,\infty}$ on $[0,T_0)$ by the Sobolev embedding. 
So, let us write the equation for the second order derivatives $|\partial_i\partial_j u|_{L^{\infty}}$ with ${i,j}=1,\ldots,n.$
$$\partial_t \partial_j\partial_i u+u\partial_j\partial_i\partial_1 u = \partial_j\partial_i\phi \ast(\rho u)-u(\partial_j\partial_i\phi \ast\rho) -(\partial_j\partial_i u+\partial_i u\partial_j+\partial_j u\partial_i) \left(\partial_1 u +\phi \ast \rho\right)$$
or along characteristics
$$
\ddt \partial_j\partial_i u =\int_{\R^n} \partial_j\partial_i \phi(|x- y |)\left(u( y )-u(x)\right)\rho( y )\,d y -e\,\partial_j\partial_i u-\partial_je\,\partial_iu-\partial_i e\,\partial_j u,$$
where 
$$\partial_j e=\partial_j\partial_1 u+\partial_j \phi \ast \rho \qquad \text{and} \qquad \partial_i e=\partial_i\partial_1 u+\partial_i \phi \ast \rho.$$
The procedure to prove exponential decay for second order derivatives consists in three steps.

\medskip
\noindent {\sc Step 1.}  First, we star considering the case $i=j=1.$ Note that for this particular case, we have
$$\ddt \partial_1^2 u =\int_{\Omega} \partial_1^2 \phi(|x- y |)\left(u( y )-u(x)\right)\rho( y )\,d y -e\,\partial_1^2 u-2\,\partial_1 e\,\partial_1 u.$$
Using that $\partial_1 e=\partial_1^{2} u+\partial_1 \phi \ast \rho,$ we arrive to the following inequality
$$\ddt  \partial_1^2 u  \leq E(t)-\partial_{1}^{2} u (e-E(t))$$
where $E(t)$ denotes a generic exponential decaying quantity. Now, as $e(x,t)\geq\frac{1}{2}\phi(\bar{D})\cM>0$ for all $x\in \text{Supp} \,\rho(\cdot,t)$ and $t>t^{\star}$ we have that
$$\ddt \partial_1^2 u \leq E(t)-\partial_{1}^{2} u \left(\tfrac{1}{2}\phi(\bar{\cD})\cM-E(t)\right) \qquad \text{for } t>t^{\star}.$$
As $E(t)$ decays exponentially fast, there must exists $t^{\star\star}>t^{\star}$ such that
$$
\ddt \partial_1^2 u  \leq E(t)-\tfrac{1}{4}\phi(\bar{\cD})\cM\partial_{1}^{2} u  \qquad \text{for } t>t^{\star\star}.$$
Then, evaluating the previous inequality at the maximum over $\Supp  \rho(\cdot,t)$ we have obtained the desired result by integration, since at any time $t$ characteristics cover all $\Supp  \rho(\cdot,t)$ we arrive at the desired bound $|\partial_1^{2} u|_{ L^{\infty}(\Supp \rho(\cdot,t)) }\leq E(t)$.

\medskip
\noindent {\sc Step 2.}  
Secondly, we consider the case $i=1$ and $i\neq j$. In this case, we have
$$\ddt \partial_{j}\partial_1 u  =\int_{\R^{n}} \partial_{j}\partial_1 \phi(|x- y |)\left(u( y )-u(x)\right)\rho( y )\,d y -e\,\partial_{j}\partial_1 u-\partial_j e\,\partial_1 u-\partial_{1}e\, \partial_{j} u,$$
	where 
$$\partial_j e=\partial_{j}\partial_1 u+\partial_j \phi \ast\rho \qquad \text{and} \qquad \partial_1 e=\partial_1^{2} u+\partial_1 \phi \ast\rho.$$
Using that $|\nabla u|_{L^{\infty}(\Supp \rho(\cdot,t))}\leq E(t)$ and the fact that $|\partial_{1}^{2} u|_{L^{\infty}(\Supp \rho(\cdot,t))}\leq E(t)$ we get 
$$\ddt  \partial_{j}\partial_1 u  \leq E(t)-\partial_{j}\partial_{1} u \left(e-E(t)\right)$$
and doing the same as before we obtain that $|\partial_{j}\partial_{1}u|_{L^{\infty}(\Supp \rho(\cdot,t))}\leq E(t)$ for $j\neq 1.$

\medskip
\noindent {\sc Step 3.}  
Finally, we consider the case $i\neq 1$. Using all the previous, we get 
$$\ddt \partial_j\partial_i u \leq E(t)-e\,\partial_j\partial_i u$$
and repeating the procedure we obtain that $|\partial_{j}\partial_{i}u|_{L^{\infty}}\leq E(t)$ for $i\neq 1$ and $j=1,\ldots,n$.

Consequently, we have proved that $|\nabla^{2} u|_{L^{\infty}(\Supp \rho(\cdot,t))}\leq E(t)$. Putting all together, there exists $C,\delta$  depending on $\phi$ and initial data $(\rho_0,u_0)$, such that the velocity satisfies
$$\underset{x\in \Supp \rho(\cdot,t)}{\sup}\left[|u(x,t)-\bar{u}|+|\nabla u(x,t)|+|\nabla^2 u(x,t)|\right]\leq C e^{-\delta t}.$$  
Moving to the density, we obtain solving the density equation along characteristics that
$$\rho(X(t),t)=\rho_{0}(X(0))\, \exp\left(-\int_{0}^{t}\partial_{1}u(X(s),s)\,ds\right).$$
So, in view of the established decay of $|\partial_{1} u|_{L^{\infty}}$, the density enjoys a pointwise global bound
$$\underset{t>0}{\text{sup}}\, |\rho(\cdot,t)|_{L^{\infty}}<\infty.$$
Next we establish a second round of estimates in higher order regularity in order to get a
control over $|\nabla \rho|_{L^{\infty}}$ and then prove flocking of the density. Let’s write the equation for $\partial_{i}\rho$ with $i=1,\ldots,n$.
$$\partial_{t}\partial_{i}\rho+\partial_{i}\partial_{1} u \,\rho+\partial_{1}u\,\partial_{i}\rho+\partial_{i}u\,\partial_{1}\rho+u\,\partial_{i}\partial_{1}\rho=0$$
or along characteristics
$$\ddt \partial_{i} \rho +\partial_{i}\partial_{1} u \,\rho+\partial_{1}u\,\partial_{i}\rho+\partial_{i}u\,\partial_{1}\rho=0.$$
The procedure to prove exponential decay consists in two steps:
\begin{enumerate}
	\item[\sc Step 1.] On one hand, we consider the case $i=1.$ Let’s write the equation for $\partial_{1}\rho$:
	$$\ddt  \partial_{1} \rho =-\partial_{1}^{2} u \,\rho-2\,\partial_{1}u\,\partial_{1}\rho=E(t)(1+\partial_{1}\rho).$$
	This shows that $|\partial_{1}\rho|_{L^{\infty}}$ is uniformly bounded. Afterthat, we are now ready to prove the same uniform bound for $\partial_{i}\rho$ with $i=2,\ldots,n.$
	
	\item[\sc Step 2.] On the other hand, we consider the case $i\neq 1.$  As $|\partial_{1}\rho|_{L^{\infty}}$ is uniformly bounded we have
	$$\ddt \partial_{i} \rho =-\partial_{i}\partial_{1} u \,\rho-\partial_{1}u\,\partial_{i}\rho-\partial_{i}u\,\partial_{1}\rho=E(t)(1+\partial_{i}\rho).$$
Therefore, this shows that $|\partial_{i}\rho|_{L^{\infty}}$ is uniformly bounded for $i=2,\ldots,n$.
\end{enumerate}
This shows that $|\nabla\rho|_{L^{\infty}}$ remains uniformly bounded. Now to establish strong 
flocking we have that the velocity alignment goes to its natural limit $\bar{u}=\mathcal{P}/M$.  Then, $\tilde{\rho}(x,t):=\rho(x_{1}+t\bar{u},x_{2},\ldots,x_n,t)$ satisfies
$$\partial_{t}\tilde{\rho}(x,t)+\tilde{\rho}(x,t)\,\partial_{1} u(x_{1}+t\bar{u},x_{2},\ldots,x_n,t) +\partial_{1}\tilde{\rho}(x,t)(u(x_{1}+t\bar{u},x_{2},\ldots,x_n,t)-\bar{u})=0.$$
According to the established bounds we have that $|\partial_{t}\tilde{\rho}|_{L^{\infty}}=E(t)$. This shows that $\tilde{\rho}(\cdot,t)$ is Cauchy in $t$ in the metric of $L^{\infty}$. Hence, there exists a unique limiting state $\rho_{\infty}(\cdot)$ such that $|\tilde{\rho}(\cdot,t)-\rho_{\infty}(\cdot)|_{L^{\infty}}=E(t).$
Shifting $x_{1}$ this can be expressed in terms of $\rho(x,t)$ and $\bar{\rho}(x,t):=\rho_{\infty}(x_{1}-t\bar{u},x_{2},\ldots,x_n,t)$ as 
$$|\rho(\cdot,t)-\bar{\rho}(\cdot)|_{L^{\infty}}= E(t).$$
Since $\nabla \rho$ is uniformly bounded, this also shows that $\bar{\rho}$ is Lipschitz. Convergence
in $C^{\gamma}$ with $0<\gamma<1$ follows by interpolation.
\end{proof}

\section{Stability and existence of flocks near a unidirectional one}\label{s:stability}
In this section we address the question of stability and prove \thm{t:stability}. We study oriented flocks in perturbative regime with initial condition
\[
\bu_0 (x)= u_0(x) \bd +  \,v_0(x) \bd^{\star} \qquad \text{for some }  \bd,\bd^{\star}\in\S^{n-1},
\]
with  $\| u_0\|_{W^{1,\infty}}\approx 1$ and $\|v_0\|_{W^{1,\infty}}\approx \e^2$ for some  $\e\ll 1$, which measures the size of the perturbation.  At the core of our analysis is again the entropy $e:=\n \cdot \bu+\phi\ast \rho$ which satisfies \eqref{e:eq} with non-zero right hand side.

The key element of our approach is to study an evolution equation for the expression on the right hand side and to establish control over its magnitude 
\[
\mathfrak{E}(t):=\|(\n \cdot \bu )^{2}-\Tr[(\nabla \bu )^{2}]\|_{L^{\infty}}.
\]
Note that initially $\mathfrak{E}(0)\approx \e^2$ and by continuity  $\mathfrak{E}(t)\lesssim \e^2$ at least for a short period of time. We therefore define a possible critical time $t^{\star}$ at which the solution hypothetically reaches size $\e$ for the first time:
\[
\mathfrak{E}(t^*)=\e, \qquad  \mathfrak{E}(t)<\e \quad \text{for } t<t^{\star}. 
\]
A contradiction will be achieved if we show that $ \mathfrak{E}'(t^{\star})<0$. This would establish the bound $\mathfrak{E}(t)<\e$ on the entire interval of existence, which in turn will imply a bound on $e$,  and hence extension to a global solution thanks to the continuation criterion \eqref{e:BKMsmooth}.

So, let derive the equation for $(\n \cdot \bu )^{2}-\Tr[(\nabla \bu )^{2}]$ to see what is needed to achieve a bound on it.  First, due to rotational invariance of the system \eqref{e:CSHydro} one can assume for simplicity that $\bd$ points in the direction of the $x_1$-axis and $\bd^{\star}$ points in the direction of the $x_2$-axis:
\begin{equation}\label{e:initialu}
	\bu_0(x) = \lan u_0(x), v_0(x),0,\ldots,0\ran.
\end{equation}
Then the solution at time $t$ takes form:
\begin{equation}\label{e:uv}
\bu(x,t) = \lan u(x,t), v(x,t),0,\ldots,0\ran.
\end{equation}
Let us observe the identity for general $\bw(x):=\langle w^1(x),\ldots,w^n(x)\rangle$
\[
(\n\cdot \bw)^{2}-\Tr[(\n \bw)^{2}]=\sum_{i\neq j}
\begin{vmatrix}
    \p_i w^i & \p_j w^i   \\
    \p_i w^j  & \p_j w^j  
\end{vmatrix}.
\]

\begin{definition}
Given three functions $f,g,h:\R^{n}\rightarrow \R$, we define the bracket $\{f,\cC_{\phi}(g,h)\}$ as another function  that takes the form
\[
\{f,\cC_{\phi}(g,h)\}:=\p_1f\, \cC_{\p_2 \phi}(g,h)-\p_2 f\, \cC_{\p_1 \phi}(g,h).
\]
\end{definition}
Using the structure of the Euler Alignment system \eqref{e:CSHydro} we derive the following equation along characteristics:
\begin{equation}\label{e:n=2}
\ddt \left[(\n\cdot\bu)^{2}-\Tr[(\n \bu)^{2}]\right]= -\left(e+\left(\phi\ast\rho\right)\right)\left[(\n\cdot\bu)^{2}-\Tr[(\n \bu)^{2}]\right]+\Upsilon_{\phi}(u,v)
\end{equation}
where 
$$\Upsilon_{\phi}(u,v):=2\left\lbrace u,\cC_{\phi}(\rho,v) \right\rbrace + 2\left\lbrace v,\cC_{\phi}(\rho,u) \right\rbrace.$$
Indeed, we have
\begin{align*}
\p_{t}(\p_{1}u)\,\p_{2}v+\left[(\partial_{1}\bu\cdot\n)u+(\bu\cdot\n)\p_{1}u\right]\p_{2}v&=\p_{2}v\,\cC_{\p_{1}\phi}(\rho,u)-\p_{2}v\,\p_{1}u \left(\phi\ast\rho\right),\\
\p_{1}v\,\p_{t}(\p_{2}u)+\left[(\p_{2}\bu\cdot\n)u+(\bu\cdot\n)\p_{2}u\right]\p_{1}v&=\p_{1}v\,\mathcal{C}_{\p_{2}\phi}(\rho,u)-\p_{1}v\,\p_{2}u \left(\phi\ast\rho\right),\\
\p_{1}u\,\p_{t}(\p_{2}v)+\left[(\p_{2}\bu\cdot\n)v+(\bu\cdot\n)\p_{2}v\right]\p_{1}u&=\p_{1}u\,\cC_{\p_{2}\phi}(\rho,v)-\p_{1}u\,\p_{2}v \left(\phi\ast\rho\right),\\
\p_{t}(\p_{1}v)\,\p_{2}u+\left[(\p_{1}\bu\cdot\n)v+(\bu\cdot\n)\p_{1}v\right]\p_{2}u&=\p_{2}u\,\mathcal{C}_{\p_{1}\phi}(\rho,v)-\p_{2}u\,\p_{1}v \left(\phi\ast\rho\right),
\end{align*}
and consequently, 
\begin{align}\label{e:cancelationPSI}
\ddt \begin{vmatrix}
    \p_1 u & \p_2 u   \\
    \p_1 v  & \p_2 v  
\end{vmatrix}+ \Psi(\bu,\n\bu)= -2\,\begin{vmatrix}
    \p_1 u & \p_2 u   \\
    \p_1 v  & \p_2 v  
\end{vmatrix}\left(\phi\ast\rho\right)+\Upsilon_{\phi}(u,v)
\end{align}
where
$$\Psi(\bu,\n\bu):=\p_{2}v\left[(\p_{1}\bu\cdot\n)u\right]+\p_{1}u\left[(\p_{2}\bu\cdot\n)v\right]-\p_{2}u\left[(\p_{1}\bu\cdot\n)v\right]-\p_{1}v\left[(\p_{2}\bu\cdot\n)u\right].$$
Note that, combining the terms in $\Psi(\bu,\n\bu)$ appropriately give us a new cancellation:
\begin{align*}
\p_{2}v\left[(\p_{1}\bu\cdot\n)u\right]-\p_{2}u\left[(\p_{1}\bu\cdot\n)v\right]&=\p_{2}v  \left(\p_{1}u\right)^{2}-\p_{2}u \, \p_{1}u\, \p_{1}v,\\
\p_{1}u\left[(\p_{2}\bu\cdot\n)v\right]-\p_{1}v\left[(\p_{2}\bu\cdot\n)u\right]&=\p_{1}u\left(\p_{2}v\right)^{2}-\p_{1}v\, \p_{2}v\, \p_{2}u,
\end{align*}
which implies
\begin{equation}\label{e:PSI=det}
\Psi(\bu,\n\bu)=\begin{vmatrix}
    \p_1 u & \p_2 u   \\
    \p_1 v  & \p_2 v  
\end{vmatrix}(\n\cdot \bu).
\end{equation}
Putting together \eqref{e:cancelationPSI} and \eqref{e:PSI=det} with the fact 
that $e=\n\cdot\bu +(\phi\ast\rho)$ we have proved \eqref{e:n=2}.

\begin{lemma}\label{l:e} On the interval $[0,t^*]$ we have
	\begin{equation}\label{e:eest}
	\tfrac{1}{2}\sqrt{\e} \leq e(x,t)\leq C_0: =2  \max\{ |e_0|_\infty , \cM |\phi|_\infty\}.
	\end{equation}
\end{lemma}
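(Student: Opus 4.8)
The plan is to derive the bounds \eqref{e:eest} from the logistic-type ODE satisfied by $e$ along characteristics, treating the source term in the entropy equation as a controlled perturbation on the interval $[0,t^*]$. Recall that $e = \n\cdot\bu + \phi\ast\rho$ satisfies $\p_t e + \n\cdot(\bu e) = (\n\cdot\bu)^2 - \Tr[(\n\bu)^2]$, so along characteristics $\ddt X = \bu$ we get
\[
\ddt e = e(\phi\ast\rho - e) + \big[(\n\cdot\bu)^2-\Tr[(\n\bu)^2]\big].
\]
On $[0,t^*]$ the source term is bounded in absolute value by $\mathfrak{E}(t) < \e$ by definition of $t^*$, and $\phi\ast\rho$ is bounded above by $\cM|\phi|_\infty$ (a priori conserved mass) and below by a positive constant once the flock diameter is controlled. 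So the whole strategy reduces to a comparison argument for the scalar ODE $\dot e = e(c-e) + r(t)$ with $|r(t)| \le \e$ and $0 < \underline c \le c(t) \le \cM|\phi|_\infty$.

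For the \emph{upper bound}, I would note that whenever $e \ge C_0 := 2\max\{|e_0|_\infty, \cM|\phi|_\infty\}$ one has $e(\phi\ast\rho - e) \le e(\cM|\phi|_\infty - e) \le -\tfrac12 e C_0 \le -C_0^2$, which dominates the source term $r(t)$ for $\e$ small; hence $e$ can never cross the level $C_0$ from below, and since $e_0 \le |e_0|_\infty < C_0$ we conclude $e(x,t) \le C_0$ throughout. For the \emph{lower bound}, the point is that the hypothesis \eqref{e:v0} gives $e_0 \ge \sqrt{\e}$, and I want to show $e$ stays above $\tfrac12\sqrt{\e}$. Here one uses that the source satisfies $r(t) \ge -\e$, so $\dot e \ge e(\phi\ast\rho - e) - \e$; when $e$ is near $\tfrac12\sqrt\e$ the logistic term $e(\phi\ast\rho-e)$ is of order $\sqrt\e$ (positive, since $e < \phi\ast\rho$ in that regime, using the lower bound on $\phi\ast\rho$ which is an honest positive constant independent of $\e$), and this beats the $O(\e)$ error. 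More precisely, at any point where $e = \tfrac12\sqrt\e$ we get $\dot e \ge \tfrac12\sqrt\e\,(\underline c - \tfrac12\sqrt\e) - \e \ge \tfrac14\underline c\sqrt\e > 0$ for $\e$ small, so $e$ cannot drop through $\tfrac12\sqrt\e$. A barrier/continuity argument then pins $e$ between the two levels on all of $[0,t^*]$.

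The main technical point I expect to need care with is justifying the positive lower bound $\phi\ast\rho \ge \underline c > 0$ with $\underline c$ independent of $\e$ on the whole interval $[0,t^*]$: this requires knowing the flock diameter stays finite, i.e. $\mathcal{D}(t) \le \bar{\mathcal D}$, which in the periodic setting of \thm{t:stability} is automatic since $\Supp\rho \subset \T^n$ has bounded diameter, giving $\phi\ast\rho \ge \phi(\diam \T^n)\cM =: \underline c$. One must also be a little careful that the comparison is run along characteristics and that "$\sup$ over $\Supp\rho$" is propagated correctly — but since characteristics transport the support and the ODE inequalities hold pointwise, evaluating at the spatial max/min of $e$ over $\Supp\rho(\cdot,t)$ closes the argument exactly as in the proof of strong flocking above. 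No smallness beyond $\e \ll 1$ is needed, and crucially none of the bounds in \eqref{e:eest} degrade as long as $\mathfrak E < \e$, which is precisely what the bootstrap on $t^*$ supplies.
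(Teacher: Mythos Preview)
Your proposal is correct and follows essentially the same approach as the paper: both arguments use the logistic ODE $\ddt e = e(\phi\ast\rho - e) + r(t)$ along characteristics with the source bounded by $\mathfrak{E}(t)<\e$ on $[0,t^*]$, and run a barrier argument at the levels $\tfrac12\sqrt{\e}$ and $C_0$ showing $\ddt e>0$ (resp.\ $<0$) at any first crossing. Your justification of the lower bound $\phi\ast\rho\ge\underline c$ via periodicity matches the paper's use of $\cM\inf_x\phi$, and the minor arithmetic slip in your upper-bound computation (you get $-\tfrac12 C_0^2$, not $-C_0^2$) is inconsequential.
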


\begin{proof}Recall the equation:
	\begin{equation}\label{e:lowerentropy}
	\ddt e=\left[(\n\cdot\bu)^{2}-\text{Tr}[(\n \bu)^{2}]\right]+e\left((\phi\ast\rho)-e\right).
	\end{equation} 
If $e(x,t) = \tfrac{1}{2}\sqrt{\e}$ for the first time $t<t^*$, then 
	\[
	\ddt e \geq -\e + \tfrac{1}{2}\sqrt{\e}( \phi \ast \rho(x,t) - \tfrac{1}{2}\sqrt{\e}) \geq -\e+ \tfrac{1}{2}\sqrt{\e}( \cM \inf_{x} \phi - \tfrac{1}{2}\sqrt{\e}) >0,
	\]
provided $\e \lesssim \cM^2$, a contradiction.  Similarly, if $e(x,t) = C_0 > |e_0|_\infty$, then 
\[
\ddt e \leq \e + C_0(|\phi|_\infty \cM - C_0) < 0,
\]
provided $C_0 > 2 \cM |\phi|_\infty$. So, the constant can be chosen $C_0 =2  \max\{ |e_0|_\infty , \cM |\phi|_\infty\}$. 
\end{proof}

We now proceed to establishing control over the partial gradient $|\n_{1,2} \bu(t) |_{{\infty}}$ which is what is needed to bound the residual term $\Upsilon_{\phi}(u,v)$ in \eqref{e:n=2}.  First, we derive an equation for the full gradient in the form suitable for our application, see also \cite{HeT2017}. 

\begin{lemma}
Assume that $\bw$ solves the multi-dimensional Euler Alignment system 
\[
\p_t \bw +(\bw\cdot\n)\bw=\mathcal{C}_{\phi}(\rho,\bw).
\]
 Then, the following equality holds:
\begin{equation}\label{e:genralD}
\ddt \p_l w^{k}-\sum_{i=1}^{n}\begin{vmatrix}
    \p_i w^{i} & \p_l w^{i}   \\
    \p_i w^{k}  & \p_l w^{k}  
\end{vmatrix}=\cC_{\p_l \phi}(\rho, w^{k})-e\,\p_l w^{k}.
\end{equation}
\end{lemma}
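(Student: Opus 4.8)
The plan is to differentiate the momentum equation $\p_t \bw + (\bw\cdot\n)\bw = \cC_\phi(\rho,\bw)$ componentwise and reorganize the transport and stretching terms into the determinant form. First I would apply $\p_l$ to the $k$-th component equation
\[
\p_t w^k + \sum_{i=1}^n w^i \p_i w^k = \cC_\phi(\rho,w^k),
\]
obtaining
\[
\p_t \p_l w^k + \sum_{i=1}^n w^i \p_i \p_l w^k + \sum_{i=1}^n \p_l w^i\, \p_i w^k = \cC_{\p_l\phi}(\rho,w^k) - \p_l w^k\,(\phi\ast\rho),
\]
where I used that $\p_l$ commutes with convolution so that $\p_l \cC_\phi(\rho,w^k) = \cC_{\p_l\phi}(\rho,w^k) - \p_l w^k\,(\phi\ast\rho)$. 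The second sum on the left is exactly the material derivative, so along characteristics this reads $\ddt \p_l w^k + \sum_i \p_l w^i\,\p_i w^k = \cC_{\p_l\phi}(\rho,w^k) - \p_l w^k(\phi\ast\rho)$.

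Next I would produce the determinant. Expanding the $2\times 2$ determinant in \eqref{e:genralD} gives
\[
\sum_{i=1}^n \begin{vmatrix} \p_i w^i & \p_l w^i \\ \p_i w^k & \p_l w^k \end{vmatrix}
= \sum_{i=1}^n \big( \p_i w^i\, \p_l w^k - \p_l w^i\, \p_i w^k\big)
= (\n\cdot\bw)\,\p_l w^k - \sum_{i=1}^n \p_l w^i\,\p_i w^k.
\]
Substituting $\sum_i \p_l w^i\,\p_i w^k = (\n\cdot\bw)\p_l w^k - \sum_i \begin{vmatrix}\p_i w^i & \p_l w^i\\ \p_i w^k & \p_l w^k\end{vmatrix}$ into the characteristic equation and moving the determinant sum to the left yields
\[
\ddt \p_l w^k - \sum_{i=1}^n \begin{vmatrix} \p_i w^i & \p_l w^i \\ \p_i w^k & \p_l w^k \end{vmatrix}
= \cC_{\p_l\phi}(\rho,w^k) - \p_l w^k\big((\n\cdot\bw) + (\phi\ast\rho)\big) = \cC_{\p_l\phi}(\rho,w^k) - e\,\p_l w^k,
\]
which is precisely \eqref{e:genralD} since $e = \n\cdot\bw + \phi\ast\rho$ for the field $\bw$.

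This computation is entirely routine; there is no real obstacle, only bookkeeping. The one point requiring a little care is the handling of the convolution term: writing $\p_l[\phi\ast(\rho w^k) - w^k(\phi\ast\rho)] = (\p_l\phi)\ast(\rho w^k) - \p_l w^k(\phi\ast\rho) - w^k(\p_l\phi\ast\rho) = \cC_{\p_l\phi}(\rho,w^k) - \p_l w^k(\phi\ast\rho)$, where the term $-\p_l w^k(\phi\ast\rho)$ is deliberately separated out because it combines with $(\n\cdot\bw)\p_l w^k$ to reconstruct $e\,\p_l w^k$. Everything is justified by the regularity $\bw\in H^m$, $\rho\in W^{k,\infty}$ with $m\geq k+1 > \tfrac n2+2$ assumed in the ambient hypotheses, so all derivatives and convolutions appearing are classical.
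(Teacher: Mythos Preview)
Your proof is correct and follows essentially the same route as the paper: differentiate the $k$-th momentum equation in $\p_l$, recognize the material derivative, and identify the stretching term $\sum_i \p_l w^i\,\p_i w^k$ with $(\n\cdot\bw)\,\p_l w^k$ minus the sum of determinants, then combine $(\n\cdot\bw)+(\phi\ast\rho)=e$. The only cosmetic difference is that the paper adds and subtracts $(\n\cdot\bw)\,\p_l w^k$ on the right and then recognizes the determinant, whereas you expand the determinant first and substitute; the content is identical.
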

\begin{proof}
Computing the $\p_l$ derivative, we get componentwise
\[
\p_t \p_l w^{k}+(\p_l\bw\cdot\n) w^{k}+(\bw\cdot\n) \p_l w^{k}=\cC_{\p_l \phi}(\rho, w^{k})-\p_l w^{k} (\phi\ast \rho).
\]
Therefore, we obtain that
\begin{align*}
\ddt  \p_l w^{k} &=\cC_{\p_l \phi}(\rho, w^{k})-\p_l w^{k} (\phi\ast \rho)-(\p_l\bw\cdot\n) w^{k}\\
&=\cC_{\p_l \phi}(\rho, w^{k})-e\,\p_l w^{k}+\p_l w^{k}(\n\cdot \bw)-(\p_l\bw\cdot\n) w^{k},
\end{align*}
where
\[
\p_l w^{k}(\n\cdot \bw)-(\p_l\bw\cdot\n) w^{k}=\sum_{i=1}^{n}\begin{vmatrix}
    \p_i w^{i} & \p_l w^{i}   \\
    \p_i w^{k}  & \p_l w^{k}  
\end{vmatrix}.
\]
\end{proof}

For our solutions the gradient of the velocity field takes form 
\[
\n\bu=\left(\begin{array}{@{}c|c@{}}
  \begin{matrix}
  \p_1 u & \p_2 u \\
  \p_1 v & \p_2 v
  \end{matrix}
  & \begin{matrix}
  \p_3 u & \ldots & \p_n u \\
  \p_3 v & \ldots & \p_n v
  \end{matrix} \\
\hline
  \begin{matrix}
  0 & 0 \\
  \vdots & \vdots\\
  0 & 0
  \end{matrix} &
  \begin{matrix}
  0 & \ldots &0 \\
  \vdots & \ddots &\vdots\\
  0 & \ldots &0
  \end{matrix}
\end{array}\right)\equiv \left(\begin{array}{@{}c|c@{}}
  P & Q \\
\hline
  0 & 0
\end{array}\right)
\]
So, we first address the upper corner part $P$.

\begin{lemma}\label{e:off-diagonal}
On time interval $[0,t^*]$ we have the following bounds
\begin{equation}\label{e:W1}
	|\p_1 v(t)|_{{\infty}}+|\p_2 v(t)|_{{\infty}}\leq C \e^{3/2}, \qquad  |\partial_{1}u(t)|_{{\infty}}+ |\partial_{2}u(t)|_{{\infty}} \leq C \e^{-1/2}.
\end{equation}
\end{lemma}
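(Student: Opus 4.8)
The plan is to read off, from the general gradient identity \eqref{e:genralD} applied to the components $w^1=u$, $w^2=v$ (and $w^k\equiv 0$ for $k\geq 3$) of our solution $\bu$, the closed system of scalar ODEs satisfied along characteristics by the four entries of the block $P$. Because $w^k\equiv 0$ for $k\geq 3$, almost every $2\times 2$ minor in \eqref{e:genralD} drops out and the $P$-block decouples from the rest of $\n\bu$; one is left with the two \emph{clean} damped equations
\[
\ddt\partial_1 v=\cC_{\partial_1\phi}(\rho,v)-e\,\partial_1 v,\qquad \ddt\partial_2 u=\cC_{\partial_2\phi}(\rho,u)-e\,\partial_2 u,
\]
the algebraic identity $\partial_1 u=e-\partial_2 v-\phi\ast\rho$, and --- after using $e=\partial_1 u+\partial_2 v+\phi\ast\rho$ to rewrite $\partial_1 u\,\partial_2 v-e\,\partial_2 v=-(\partial_2 v+\phi\ast\rho)\partial_2 v$ ---
\[
\ddt\partial_2 v=-\bigl(\partial_2 v+\phi\ast\rho\bigr)\partial_2 v-\partial_2 u\,\partial_1 v+\cC_{\partial_2\phi}(\rho,v).
\]

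The a priori inputs are: \lem{l:e}, i.e.\ $\tfrac12\sqrt{\e}\leq e\leq C_0$ on $[0,t^*]$; the componentwise maximum principle $|u(t)|_\infty\leq|u_0|_\infty\approx 1$, $|v(t)|_\infty\leq|v_0|_\infty\approx\e^2$; the uniform lower bound $\phi\ast\rho\geq c_0:=\cM\inf_{\T^n}\phi>0$ on the compact torus; and the attendant amplitude decay $\cA(t)\leq\cA(0)e^{-c_0 t}$, $\cA(0)\lesssim 1$. These control the commutator forcing,
\[
|\cC_{\partial_l\phi}(\rho,v)|_\infty\lesssim\cM|\partial_l\phi|_\infty|v|_\infty\lesssim\e^2,\qquad |\cC_{\partial_l\phi}(\rho,u)|_\infty\lesssim\cM|\partial_l\phi|_\infty\,\cA(t)\lesssim e^{-c_0 t},
\]
using $|v(y)-v(x)|\leq 2|v|_\infty$ for the first and $|u(y)-u(x)|\leq\cA(t)$ on $\Supp\rho$ for the second.

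I would then carry out the estimates in this order, each by evaluating the relevant ODE at the running maximum and minimum over $\Supp\rho(\cdot,t)$ (the characteristics foliate the support). First, $\ddt|\partial_1 v|_\infty\leq C\e^2-\tfrac12\sqrt{\e}\,|\partial_1 v|_\infty$ gives $|\partial_1 v(t)|_\infty\leq\max\{|\partial_1 v_0|_\infty,\,C\e^{3/2}\}\lesssim\e^{3/2}$. Second, $\ddt|\partial_2 u|_\infty\leq Ce^{-c_0 t}-\tfrac12\sqrt{\e}\,|\partial_2 u|_\infty$ gives, since $c_0>\sqrt{\e}$, $|\partial_2 u(t)|_\infty\lesssim(|\partial_2 u_0|_\infty+1)e^{-\frac12\sqrt{\e}t}\lesssim 1\leq C\e^{-1/2}$. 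Third, in the $\partial_2 v$ equation the forcing $-\partial_2 u\,\partial_1 v+\cC_{\partial_2\phi}(\rho,v)$ is now $\lesssim 1\cdot\e^{3/2}+\e^2\lesssim\e^{3/2}$, and on the relatively open set where $|\partial_2 v|_\infty\leq c_0/2$ the damping coefficient obeys $\partial_2 v+\phi\ast\rho\geq c_0/2$; hence $\ddt|\partial_2 v|_\infty\leq C\e^{3/2}-\tfrac{c_0}{2}|\partial_2 v|_\infty$ yields $|\partial_2 v(t)|_\infty\leq\max\{|\partial_2 v_0|_\infty,\,(2C/c_0)\e^{3/2}\}\lesssim\e^{3/2}\ll c_0/2$, closing that continuity argument. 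Finally $|\partial_1 u(t)|_\infty\leq C_0+|\partial_2 v(t)|_\infty+\cM|\phi|_\infty\lesssim 1\leq C\e^{-1/2}$. Adding the four bounds gives \eqref{e:W1}.

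I expect the main obstacle to be structural rather than computational: the damping felt by the diagonal entries $\partial_1 u,\partial_2 v$ is \emph{not} the entropy $e$ (which would give only the weak rate $\sqrt{\e}$), but the coefficient $\partial_2 v+\phi\ast\rho$ uncovered by the cancellation $\partial_1 u\,\partial_2 v-e\,\partial_2 v=-(\partial_2 v+\phi\ast\rho)\partial_2 v$; it is bounded below by the \emph{$\e$-independent} constant $c_0$, and this gain is exactly what promotes $\partial_2 v$ to the sharp order $\e^{3/2}$ rather than $O(\e)$. A related point is that reaching $\e^{3/2}$ for $\partial_2 v$ forces one first to bring $\partial_2 u$ down to $O(1)$, which is where the flocking decay of $\cA(t)$ enters; and since the four estimates are coupled through their forcing terms, the order of the argument and the short continuity step absorbing $\partial_2 v+\phi\ast\rho\geq c_0/2$ must be arranged carefully.
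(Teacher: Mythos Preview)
Your argument is correct and follows the same skeleton as the paper --- extract the four scalar ODEs for the $P$-block from \eqref{e:genralD} and close by Gr\"onwall --- but with two substantive departures worth recording.

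First, for $\partial_2 u$ you exploit the exponential amplitude decay $\cA(t)\lesssim e^{-c_0 t}$ (legitimate on $\T^n$ since $\phi\ast\rho\geq c_0>0$ globally) to bound the forcing $\cC_{\partial_2\phi}(\rho,u)$ by $Ce^{-c_0 t}$, and this yields the $\e$-independent bound $|\partial_2 u|_\infty\lesssim 1$. The paper instead uses only the static estimate $|\cC_{\partial_2\phi}(\rho,u)|\lesssim |u_0|_\infty$ together with the weak damping $\tfrac12\sqrt{\e}$, arriving at $|\partial_2 u|_\infty\lesssim\e^{-1/2}$. Your sharper bound pays off immediately in the $\partial_2 v$ equation: the coupling term $\partial_2 u\,\partial_1 v$ is $O(1)\cdot\e^{3/2}=\e^{3/2}$ for you, whereas the paper's own bounds give only $\e^{-1/2}\cdot\e^{3/2}=\e$, so your route delivers the stated exponent $\e^{3/2}$ on $\partial_2 v$ more transparently.

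Second, you read off $\partial_1 u$ algebraically from $\partial_1 u=e-\partial_2 v-\phi\ast\rho$, obtaining $|\partial_1 u|_\infty\lesssim 1$ for free once $e$ and $\partial_2 v$ are controlled; the paper instead runs the ODE \eqref{e:systemP} with the $\mathfrak{E}$-term and the $\sqrt{\e}$ damping. For $\partial_2 v$ itself, the paper first obtains a crude $\e^{1/2}$ bound and then passes to the oscillation $d(t)=\partial_2 v(x^+)-\partial_2 v(x^-)$, while you bound the max and min separately via a continuity step guaranteeing $\partial_2 v+\phi\ast\rho\geq c_0/2$; both mechanisms are equivalent. One small imprecision: you should take maxima and minima over all of $\T^n$, not just over $\Supp\rho$, and correspondingly interpret $\cA(t)$ as the global amplitude --- this is what the commutator estimate at an arbitrary $x$ actually requires, and the global amplitude does decay at rate $c_0$ by the same maximum-principle computation.
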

\begin{proof} 
	First, we consider the off-diagonal elements of submatrix $P$:
	\begin{equation}
	\left\{
	\begin{split}
	\ddt \p_2 u&=\cC_{\p_2 \phi}(\rho, u)-e\,\p_2 u, \\
	\ddt \p_1 v &=\cC_{\p_1 \phi}(\rho, v)-e\,\p_1 v.
	\end{split}\right.
	\end{equation}
Recalling \eqref{e:eest} we have
\begin{align*}
\p_{t}|\p_{1}v|_\infty &\leq 2\,|\p_{1}\phi|_\infty\cM |v_0|_\infty- \frac12 \sqrt{\e}\, |\p_{1}v|_\infty,\\
\p_{t}|\p_{2}u|_\infty&\leq 2\,|\p_{2}\phi|_\infty \cM |u_0|_\infty - \frac12 \sqrt{\e}\,|\p_{2}u|_\infty.
\end{align*}
Gronwall's inequality tell us that
\begin{align*}
|\p_{1}v|_\infty &\leq |\p_{1}v_0|_\infty \,e^{-\frac12 \sqrt{\e} t}+\frac{2}{\sqrt{\e}}\,|\p_{1}\phi|_\infty\cM |v_0|_\infty (1-e^{-\frac12 \sqrt{\e} t}) \leq C \e^{3/2}\\
|\p_{2}u|_\infty&\leq |\p_{2}u_0|_\infty\,e^{-\frac12 \sqrt{\e} t}+\frac{2}{\sqrt{\e}}\,|\p_{2}\phi|_\infty\cM |u_0|_\infty(1-e^{-\frac12 \sqrt{\e} t})\leq C \e^{-1/2}.
\end{align*}
For the diagonal entries, we have
\begin{equation}\label{e:systemP}
\left\{
\begin{split}
\ddt \p_1 u -\tfrac{1}{2}\left[(\n\cdot\bu)^{2}-\text{Tr}[(\n \bu)^{2}]\right]&=\cC_{\p_1 \phi}(\rho, u)-e\,\p_1 u, \\
\ddt  \p_2 v -\tfrac{1}{2}\left[(\n\cdot\bu)^{2}-\text{Tr}[(\n \bu)^{2}]\right]&=\cC_{\p_2 \phi}(\rho, v)-e\,\p_2 v.
\end{split}\right.
\end{equation}
Thus, on the interval $[0,t^*]$:
\[
\p_{t}|\p_{1}u|_\infty \leq \e + 2\,|\p_{1}\phi|_\infty\cM |u_0|_\infty- \frac12 \sqrt{\e}\, |\p_{1}u|_\infty,
\]
Hence,
\[
|\p_{1}u|_\infty \leq  |\p_{1}u_0|_\infty \,e^{-\frac12 \sqrt{\e} t}+\frac{2}{\sqrt{\e}}\,(|\p_{1}\phi|_\infty\cM |u_0|_\infty +\e) (1-e^{-\frac12 \sqrt{\e} t}) \leq C\e^{-1/2}.
\]
A similar estimate for $\p_2 v $ would have given us $\lesssim \e^{1/2}$, which is not sufficient. 
To improve this bound, we need to rewrite the equation for $\p_2 v$ in a different way:
\begin{equation}\label{e:d2v}
\ddt  \p_2 v +\p_2 u \, \p_1 v+ \p_2 v\p_2 v=\cC_{\p_2 \phi}(\rho, v)-(\phi\ast\rho)\,\p_2 v.
\end{equation}
To analyze the evolution in time of $\p_2 v$ through \eqref{e:d2v}, we define the points $x^{\pm}(t)$ as follows
$$\p_2 v(x^{+}(t),t):=\max \p_2 v(x,t) \qquad \text{and} \qquad \p_2 v(x^{-}(t),t):=\min \p_2 v(x,t).$$
Then,
\begin{align*}
\ddt  \p_2 v(x^{+}(t),t)&\leq -\left[(\phi\ast\rho)+\p_2 v (x^{+}(t),t)\right]\,\p_2 v(x^{+}(t),t)+ C \e^{3/2},\\
- \ddt \p_2 v(x^{-}(t),t)&\leq \left[(\phi\ast\rho)+\p_2 v(x^{-}(t),t)\right]\,\p_2 v(x^{-}(t),t)+C \e^{3/2},
\end{align*}
and their difference $\mathrm{d}(t):=\p_2 v^{\e}(x^{+}(t),t)-\p_2 v^{\e}(x^{-}(t),t)$ 
satisfies
\begin{equation}\label{e:logisticD}
\mathrm{d}'(t)\leq -\left[(\phi\ast\rho)+\p_2 v^{\e}(x^{+}(t),t)-\p_2 v^{\e}(x^{-}(t),t)\right] \mathrm{d}(t)+C \e^{3/2}.
\end{equation}
We already know that $|\p_2 v |_\infty \leq \e^{1/2}$ we have 
\[
(\phi\ast\rho)+\p_2 v^{\e}(x^{+}(t),t)-\p_2 v^{\e}(x^{-}(t),t) \geq c_0,
\]
and hence
\[
\mathrm{d}'(t)\leq  - c_0\mathrm{d}(t) + C \e^{3/2}, \quad \mathrm{d}(0) \leq \e^2.
\]
Application of Gr\"onwall's lemma gives
$\mathrm{d}(t) \lesssim \e^{3/2}$ and the proof is complete.
\end{proof}

With all ingredients at hand we are now ready to use the equation \eqref{e:n=2} to close the argument for global existence. At the critical time $t^{\star}$, we have 
\[
\ddt\mathfrak{E}(t^{\star})\leq -c \cM \mathfrak{E}(t^{\star})+\Upsilon_{\phi}(u,v)
\]
where 
\[
\Upsilon_{\phi}(u,v)=2\left\lbrace u,\cC_{\phi}(\rho,v) \right\rbrace + 2\left\lbrace v,\cC_{\phi}(\rho,u) \right\rbrace.
\]
We have, using \eqref{e:W1} and $|v|_\infty \leq |v_0|_\infty \approx \e^2$,
\begin{align*}
\left\lbrace u,\cC_{\phi}(\rho,v) \right\rbrace & \leq  C \e^{-1/2} \e^{2} = C \e^{3/2} \\
\left\lbrace v,\cC_{\phi}(\rho,u) \right\rbrace & \leq  C \e^{3/2}.
\end{align*}
Then 
\[
\ddt \mathfrak{E}(t^{\star})\leq -c \e +C\e^{3/2} <0
\]
for $\e>0$ small enough.  This shows that $t^* = \infty$, and hence by \lem{l:e} we have a uniform bound on $e$, which fulfills the continuation criterion of \thm{t:locsmooth}.

Finally, we establish control over the remaining part of the gradient matrix $Q$.  
\begin{lemma}
We have  for all $k = 3,\ldots,n$ and all time $t>0$,
\[
|\p_k u(t)|_\infty \leq c \e^{-1/2}, \qquad |\p_k v(t)|_\infty  \leq c \e.
\]
\end{lemma}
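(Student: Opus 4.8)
The plan is to derive transport equations along characteristics for the two surviving entries $\p_k u$ and $\p_k v$ of the block $Q$, $k=3,\dots,n$, via the general identity \eqref{e:genralD}, and then close a weakly coupled system of differential inequalities for $|\p_k u|_\infty$ and $|\p_k v|_\infty$ using the bounds of \lem{l:e} and \lem{e:off-diagonal}, which by the argument just completed hold on all of $[0,\infty)$ since $t^*=\infty$.

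First I would specialize \eqref{e:genralD} to $\bw=\bu=\langle u,v,0,\dots,0\rangle$. Because $w^i\equiv 0$ for $i\geq 3$, every $2\times2$ determinant in the sum with $i\geq 3$ vanishes; for $\p_k u$ (the first component, $l=k$) the $i=1$ determinant vanishes as well, leaving only $i=2$, and for $\p_k v$ (the second component, $l=k$) only the $i=1$ determinant survives. Writing $e=\p_1 u+\p_2 v+\phi\ast\rho$ this yields
\[
\ddt \p_k u = -(\p_1 u+\phi\ast\rho)\,\p_k u-\p_2 u\,\p_k v+\cC_{\p_k \phi}(\rho,u),\qquad
\ddt \p_k v = -(\p_2 v+\phi\ast\rho)\,\p_k v-\p_1 v\,\p_k u+\cC_{\p_k \phi}(\rho,v).
\]
Next I would record the inputs. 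Since $t^*=\infty$, \lem{l:e} gives $e\geq\tfrac12\sqrt{\e}$ for all $t$, so together with $|\p_2 v|_\infty\lesssim\e^{3/2}$ from \lem{e:off-diagonal} one has $\p_1 u+\phi\ast\rho=e-\p_2 v\geq\tfrac14\sqrt{\e}$ for $\e$ small; on $\T^n$, $\phi\ast\rho\geq(\inf_{\T^n}\phi)\,\cM=:c_0>0$ as used in the proof of \lem{l:e}, so $\p_2 v+\phi\ast\rho\geq\tfrac{c_0}{2}$; from \lem{e:off-diagonal}, $|\p_2 u|_\infty\lesssim\e^{-1/2}$ and $|\p_1 v|_\infty\lesssim\e^{3/2}$; and the maximum principle for the $u$- and $v$-equations gives $|u|_\infty\leq|u_0|_\infty\approx1$, $|v|_\infty\leq|v_0|_\infty\approx\e^2$, hence $|\cC_{\p_k \phi}(\rho,u)|\leq2|\p_k\phi|_\infty\cM|u|_\infty\lesssim1$ and $|\cC_{\p_k \phi}(\rho,v)|\lesssim\e^2$.

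Evaluating the two equations at the spatial maximum and minimum and taking differences yields the cooperative system
\[
\ddt|\p_k u|_\infty\leq -\tfrac14\sqrt{\e}\,|\p_k u|_\infty+C\e^{-1/2}|\p_k v|_\infty+C,\qquad
\ddt|\p_k v|_\infty\leq -\tfrac{c_0}{2}|\p_k v|_\infty+C\e^{3/2}|\p_k u|_\infty+C\e^2,
\]
with $|\p_k u_0|_\infty\lesssim1$, $|\p_k v_0|_\infty\lesssim\e^2$. Since the off‑diagonal coefficients are nonnegative, a comparison/bootstrap argument applies: integrating the second inequality gives $|\p_k v(t)|_\infty\leq|\p_k v_0|_\infty+\tfrac{2}{c_0}\big(C\e^{3/2}\sup_{[0,t]}|\p_k u|_\infty+C\e^2\big)$; feeding this into the first one, the self‑feedback term acquires coefficient $\tfrac{2C^2}{c_0}\e$ against a damping of rate $\tfrac14\sqrt{\e}$, i.e. a feedback gain of order $\sqrt{\e}\ll1$, so it can be absorbed for $\e$ small (depending only on $\phi,\cM$), giving $\sup_{[0,t]}|\p_k u|_\infty\lesssim\e^{-1/2}$ uniformly in $t$. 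Re‑inserting this into the $\p_k v$‑inequality then produces $\ddt|\p_k v|_\infty\leq-\tfrac{c_0}{2}|\p_k v|_\infty+C\e+C\e^2$, and Grönwall gives $|\p_k v(t)|_\infty\lesssim\e$.

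The main obstacle is the genuine weakness of the damping in the $\p_k u$‑equation: the coefficient $\p_1 u+\phi\ast\rho\sim e$ is only of order $\sqrt{\e}$ while the forcing $\cC_{\p_k\phi}(\rho,u)$ is $O(1)$, so $\p_k u$ is truly of size $\e^{-1/2}$ and the cross term $\p_2 u\,\p_k v\sim\e^{-1/2}|\p_k v|_\infty$ is a priori of the same order — the two estimates cannot be closed separately. What rescues the argument is that the feedback loop $\p_k u\to\p_k v\to\p_k u$ carries the factor $\e^{-1/2}\cdot\e^{3/2}=\e$ (equivalently, the $2\times2$ coefficient matrix has negative trace and determinant $\gtrsim\sqrt{\e}>0$ once $\e\lesssim c_0^2/C^4$), so the system is effectively triangular and everything reduces to careful bookkeeping of the powers of $\e$.
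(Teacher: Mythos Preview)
Your proposal is correct and follows essentially the same route as the paper: both derive the identical pair of equations for $\p_k u$ and $\p_k v$ from \eqref{e:genralD}, extract exactly the same system of coupled differential inequalities with coefficients $(-c\sqrt{\e},\, C\e^{-1/2})$ and $(C\e^{3/2},\,-c_0)$, and exploit the same cooperativity (nonnegative off-diagonal entries) to close. The only difference is in how the resulting $2\times 2$ linear ODI is solved: the paper diagonalizes the coefficient matrix and computes the matrix exponential explicitly, whereas you run a bootstrap (Gr\"onwall on the $\p_k v$-equation, feed into the $\p_k u$-equation, absorb the $O(\sqrt{\e})$ feedback). Your argument is slightly more elementary and sidesteps the eigenvalue computation; the paper's approach is more systematic and would scale more directly to larger coupled systems. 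Both rely on the same structural observation you highlight at the end, namely that the loop gain $\e^{-1/2}\cdot\e^{3/2}=\e$ is beaten by the weak damping $\sqrt{\e}$.
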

\begin{proof}
We write the system for $\p_k u $ and $\p_k v$ as follows:
\begin{equation}\label{e:goodsystem}
\left\{
\begin{split}
\ddt \p_k u - \p_2 v\, \p_k u + \p_2 u\, \p_k v&=\cC_{\p_k \phi}(\rho, u)-e\,\p_k u, \\
\ddt \p_k v +\p_1 v\, \p_k u +\p_2 v\, \p_k v&=\cC_{\p_k \phi}(\rho, v)- (\phi\ast \rho)\,\p_k v. \\
\end{split}\right.
\end{equation}
Denote $X_k(t):=|\p_k u(t)|_\infty$ and $Y_k(t):=|\p_k v(t)|_\infty$ for $k=3,\ldots,n$.
Combining \eqref{e:W1} together with the lower bound $(\phi\ast\rho)\geq c_0$ and the fact that $e(t)\geq \frac12 \sqrt{\e}$, the system \eqref{e:goodsystem} can be put as
\begin{equation}\label{e:matrixsystem}
\left\{
\begin{split}
\dot{X}_k(t)&\leq -c_1\sqrt{\e} X_k(t) + \frac{c_2}{\sqrt{\e}} Y_k(t)  + c_3, \\
\dot{Y}_k(t)&\leq c_4 \e^{3/2} X_k(t) - c_5 Y_k(t)+ c_6 \e^2
\end{split}\right.
\end{equation}
where all the constants are independent of $\e$. Obviously, defining the vector $Z_k(t):=(X_k(t),Y_k(t))$ the system \eqref{e:matrixsystem} of ODI can be re-write in matrix form as $\dot{Z}_k(t)\leq \mathrm{A} Z_k(t)+\mathrm{b}$ with diagonalization $\mathrm{A}=\mathrm{P D P}^{-1}$, where
\[
\mathrm{A}:=\begin{pmatrix}
-c_1 \sqrt{\e} & \frac{c_2}{\sqrt{\e}}\, \\
c_4 \e^{3/2} & -c_5
\end{pmatrix}, \quad \mathrm{b}:=\begin{pmatrix}
c_3 \, \\
c_6 \e^{2} 
\end{pmatrix}
\qquad \text{and} \qquad \mathrm{D}:=\begin{pmatrix}
\lambda_{+}(\e) & 0\, \\
0 & \lambda_{-}(\e)
\end{pmatrix}.
\]
Noting that $\text{Tr}(\mathrm{A})<0$ and $\text{Det}(\mathrm{A})>0$ for $\e>0$ small enough, both eigenvalues $\lambda_{\pm}(\e)$ are negative.
More specifically, we have that
\[
\lambda_{\pm}(\e):=\tfrac{\text{Tr}(\mathrm{A})}{2}\pm \sqrt{\left(\tfrac{\text{Tr}(\mathrm{A})}{2}\right)^2-\text{Det}(\mathrm{A})}
\]
with
\[
\lambda_{+}(\e)\to -c_5+\cO(\e) \qquad \text{and} \qquad \lambda_{+}(\e)\to -\sqrt{\e}+\cO(\e).
\]
Making an ansatz to use an integrating factor of $e^{-\mathrm{A} t}$ and multiplying throughout, yields
\[
\mathrm{Z}_k(t)\leq e^{\mathrm{A}t}Z_k(0)+\int_{0}^{t}e^{\mathrm{A}(t-s)}\mathrm{b}\,ds.
\]
So, calculating $e^{\mathrm{A}t}=\mathrm{P}e^{\mathrm{D}t}\mathrm{P}^{-1}$ leads to the solution to the system, by simply integrating  with respect to t. Using that $Z_k(0)\approx (1,\e^2)$ and some elementary linear algebra, we obtain that
\begin{align*}
X_k(t)&\lesssim \left( e^{\lambda_{-}t}+e^{\lambda_{+}t} \right)+\left(\frac{e^{\lambda_{-}t}-1}{\lambda_{-}}+\frac{e^{\lambda_{+}t}-1}{\lambda_{+}}\right),\\
Y_k(t)&\lesssim \e^{3/2} \left( e^{\lambda_{-}t}-e^{\lambda_{+}t}\right) +\e^{3/2}\left(\frac{e^{\lambda_{-}t}-1}{\lambda_{-}}-\frac{e^{\lambda_{+}t}-1}{\lambda_{+}}\right).
\end{align*}
Combining everything we conclude that  
\[
X_k(t) \leq  c_7  \e^{-1/2}, \quad Y_k(t) \leq c_8 \e, \quad \forall t\geq 0,
\]
as desired.
\end{proof}
Combined with previously established bound \eqref{e:W1} we conclude the stability bound for the full gradient of the perturbation $|\n v(t)|_\infty \lesssim \e$.

\section{Mikado clusters}\label{s:mikado}

We now turn our attention to the cluster system \eqref{e:CSmultiflocks}.  First issue we encounter is the maximum principle. Although the system satisfies the global maximum principle -- each velocity component variation is non-increasing, this may not be the case within each individual flock. One can derive an ``internal maximum principle'', meaning that the velocities relative to the momentum of $\alpha$-flock are in fact decaying. To obtain this let us pass to the reference frame moving with the average momentum in each flock:
$$\mathbf{v}_{\alpha}(x,t):=\bu_{\alpha}(x-\mathcal{X}_{\alpha}(t),t)-\mathcal{V}_{\alpha}(t)\qquad \text{and} \qquad \varrho_{\alpha}(x,t):=\rho_{\alpha}(x-\mathcal{X}_{\alpha}(t),t).$$
and write the system in the new variables
\begin{equation}\label{e:CShydroW}
 \left\{
    \begin{split}
      \partial_t \varrho_{\alpha} +\n\cdot(\varrho_{\alpha} \textbf{v}_{\alpha})&= 0, \\
      \partial_t \textbf{v}_{\alpha}+\textbf{v}_{\alpha}\cdot\nabla\textbf{v}_{\alpha}&=\lambda_{\alpha} \left[\phi_{\alpha} \ast(\varrho_{\alpha} \textbf{v}_{\alpha})-\textbf{v}_{\alpha}(\phi_{\alpha} \ast\varrho_{\alpha})\right]+\varepsilon R_{\alpha}(t)\mathbf{v}_{\alpha},
    \end{split}\right.
\end{equation}
where
$$R_{\alpha}(t):=\sum_{\beta\neq\alpha}\cM_{\beta}\Psi(\mathcal{X}_{\alpha}(t)-\mathcal{X}_{\beta}(t)).$$
The following statement is easy to verify.
\begin{lemma}
	The set of variables $(\bu_\a,\rho_\a)_\a$ satisfy \eqref{e:CSmultiflocks} if and only if $(\cX_\a,\cV_\a)_\a$ satisfy the discrete Cucker-Smale system \eqref{e:CSagent}, and $(\bv_\a,\varrho_\a)_\a$ satisfy \eqref{e:CShydroW}.
\end{lemma}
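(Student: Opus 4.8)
The plan is to establish the equivalence by a direct substitution, carried out at the level of (sufficiently regular) classical solutions so that differentiation under the integral, integration by parts and Fubini are all legitimate. The assertion is purely algebraic: the assignment sending $(\bu_\a,\rho_\a)_\a$ to the triple built from the moment integrals $(\cX_\a,\cV_\a)_\a$ and the co-moving shift $(\bv_\a,\varrho_\a)_\a$ is invertible on the relevant class, and the three evolution systems match term by term.

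\textbf{Forward direction.} Suppose $(\bu_\a,\rho_\a)_\a$ solves \eqref{e:CSmultiflocks}. First, differentiating $\cM_\a\cX_\a(t)=\int_{\R^n}x\,\rho_\a\,dx$, substituting the continuity equation and integrating by parts gives $\ddt(\cM_\a\cX_\a)=\int_{\R^n}\rho_\a\bu_\a\,dx=\cM_\a\cV_\a$; since $\cM_\a$ is conserved this yields $\dot\cX_\a=\cV_\a$. Next, multiply the $\bu_\a$-equation by $\rho_\a$ and use the continuity equation to rewrite the left side as $\p_t(\rho_\a\bu_\a)+\n\cdot(\rho_\a\,\bu_\a\otimes\bu_\a)$; integrating over $\R^n$, the internal alignment term drops out by the commutator representation \eqref{e:alignmentterm} (Fubini together with the swap $x\leftrightarrow y$), while the inter-flock term contributes $\varepsilon\sum_{\b\ne\a}\cM_\b\Psi(\cX_\a-\cX_\b)\,\cM_\a(\cV_\b-\cV_\a)$. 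Dividing by $\cM_\a$ we recover exactly the Cucker-Smale system \eqref{e:CSagent} with $\phi=\Psi$. Finally, substitute into \eqref{e:CSmultiflocks} the expressions for $\bu_\a,\rho_\a$ obtained by inverting the co-moving change of variables. Differentiating in time produces a translation term $\propto\dot\cX_\a\cdot\n$ together with a term $\dot\cV_\a$; the former cancels against the $\cV_\a$-part of the transport term $\bu_\a\cdot\n\bu_\a$ thanks to $\dot\cX_\a=\cV_\a$, leaving the clean material derivative $\p_t\bv_\a+\bv_\a\cdot\n\bv_\a$ up to the residual $\dot\cV_\a$. The internal operator $\cC_{\phi_\a}$ is covariant under this translation, and --- crucially --- invariant under the Galilean boost $\bu_\a\mapsto\bu_\a-\cV_\a$ precisely because of the difference structure in \eqref{e:alignmentterm}, so it turns into $\lambda_\a[\phi_\a\ast(\varrho_\a\bv_\a)-\bv_\a(\phi_\a\ast\varrho_\a)]$. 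The inter-flock forcing, after inserting the shift, splits into the part $\varepsilon\sum_{\b\ne\a}\cM_\b\Psi(\cX_\a-\cX_\b)(\cV_\b-\cV_\a)=\dot\cV_\a$, which cancels the residual $\dot\cV_\a$, and a linear term proportional to $R_\a(t)\bv_\a$. What is left is precisely \eqref{e:CShydroW}; the $\varrho_\a$ continuity equation is the same computation without the $\cV_\a$ subtraction.

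\textbf{Converse and main obstacle.} Running the same chain of identities in reverse shows that if $(\cX_\a,\cV_\a)_\a$ solves \eqref{e:CSagent} and $(\bv_\a,\varrho_\a)_\a$ solves \eqref{e:CShydroW}, then the fields reconstructed by the co-moving shift solve \eqref{e:CSmultiflocks}. The one point requiring a separate (and immediate) verification --- and the only real subtlety --- is a consistency/closure check: the center of mass and momentum computed from the reconstructed $(\bu_\a,\rho_\a)$ must coincide with the prescribed $\cX_\a,\cV_\a$. This amounts to the normalization $\int_{\R^n}x\,\varrho_\a\,dx=0$ and $\int_{\R^n}\bv_\a\varrho_\a\,dx=0$, which is preserved by \eqref{e:CShydroW}: from the equations one gets $\ddt\int x\,\varrho_\a\,dx=\int\bv_\a\varrho_\a\,dx$ while $\ddt\int\bv_\a\varrho_\a\,dx$ is a scalar multiple of $\int\bv_\a\varrho_\a\,dx$ (the internal alignment term carrying zero momentum by the antisymmetry of \eqref{e:alignmentterm}), a closed linear system whose only solution with vanishing initial data is identically zero. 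Apart from this, there is no analytic difficulty: the whole lemma is a bookkeeping exercise in transforming the PDEs, the care being entirely in tracking the co-moving shift (and the sign convention in $x\mapsto x-\cX_\a(t)$) together with the normalization-preservation step just described.
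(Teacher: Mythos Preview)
Your proposal is correct and follows exactly the approach the paper has in mind: the paper offers no proof beyond the remark that the statement ``is easy to verify,'' and your direct substitution --- computing $\dot\cX_\a$, $\dot\cV_\a$ by integration against the continuity and momentum equations, then transforming to the co-moving frame using Galilean covariance of the commutator $\cC_{\phi_\a}$ --- is precisely that verification. Your flagging of the closure check in the converse direction (that $\int x\,\varrho_\a\,dx=0$ and $\int \bv_\a\varrho_\a\,dx=0$ are preserved by \eqref{e:CShydroW}) is a genuine and useful addition: the paper's subsequent Mikado construction does specify $(\cX_\a,\cV_\a)_\a$ independently, so this consistency is exactly what is needed there, even though the lemma as literally stated (with $\cX_\a,\cV_\a$ \emph{defined} as moments of $\rho_\a,\bu_\a$) does not strictly require it.
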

Moreover, one can prove a similar local well-posedness result as in \thm{t:locsmooth} with similar continuation criterion satisfied by each $\a$-flock.

So, we construct Mikado solutions to \eqref{e:CSmultiflocks} by specifying an arbitrary set of macroscopic parameters $(\cX_\a,\cV_\a)_\a$ satisfying the discrete Cucker-Smale system \eqref{e:CSagent} and setting 
\begin{equation}\label{e:mikadoansatz2}
\bv_{\alpha}(x,t)=v_{\alpha}(x,t)\,\br_\a \qquad \text{for} \quad v_{\alpha}:\mathbb{R}^n\times \mathbb{R}^{+}\rightarrow \mathbb{R}, \quad \br_\a\in \S^{n-1}.
\end{equation}
As in the monoflock case the entropy plays a crucial role,
\begin{equation}\label{e:ealpha}
e_\a := \br_\a \cdot \n v_{\a}+\lambda_{\a}(\phi_{\a} \ast\varrho_{\a}),
\end{equation}
which satisfies
\begin{equation*}
\partial_t e_{\alpha}+  \n \cdot ( \bv_{\a} \varrho_\a) = -\varepsilon R_{\alpha}(t)\,(\n\cdot\mathbf{v}_{\alpha}),
\end{equation*}
or equivalently along characteristics
\[
\ddt  e_{\alpha}=(\varepsilon R_{\alpha}(t)+e_{\alpha})(\lambda_{\alpha}(\phi_{\a} \ast\varrho_{\alpha})-e_{\alpha}).
\]
Since $R_\a \geq 0$, the initial positive entropy $e_\a \geq 0$ will preserve its sign, and also be globally bounded. Thus, $\n \cdot \bv_\a$ is bounded, and hence we obtain global existence by \thm{t:locsmooth}.

It was already shown in \cite{ST-multi} that any classical solution to a multi-flock aligns exponentially fast.   To prove strong flocking we simply observe that the scalar pair $(v_\a,\varrho_\a)$  satisfies 
\begin{equation}\label{e:CSmultinew}
\left\{
\begin{split}
\partial_t \varrho_{\alpha} +\n\cdot(\varrho_{\alpha} v_{\alpha}\textbf{r}_{\alpha})&= 0, \\
\partial_t v_{\alpha}+\left(\textbf{r}_{\alpha}\cdot\nabla v_{\alpha}\right)v_{\alpha}&=\lambda_{\alpha} \left[\phi_{\alpha} \ast(\varrho_{\alpha} v_{\alpha})-v_{\alpha}(\phi_{\alpha} \ast\varrho_{\alpha})\right]+\varepsilon R_{\alpha}(t)v_{\alpha},
\end{split}\right.
\end{equation}
which is similar to \eqref{e:uniCS} with the exception of the extra term $\varepsilon R_{\alpha}(t)v_{\alpha}$ which plays the role of extra damping since $R_\a \geq 0$.  So, the same analysis as in mono-flock case applies.

It is interesting to observe that each $\alpha$-flock aligns to its momentum $\mathcal{V}_{\alpha}$ regardless of whether momenta themselves align or not. So, strong internal communication $\phi_{\alpha}$ leads to local emergent behavior within
the  $\alpha$-flock despite potentially destabilizing influence of the others flocks. On the other hand, if the inter-communication kernel $\Psi$ is global, then the global emergence occurs even if internal communications are weak or completely absent. In this case, every agent aligns to the total momentum of the system $\mathcal{V}$.\\


\end{document}